\numberwithin{equation}{section}
\theoremstyle{plain}
\newtheorem{maintheorem}{Theorem}
\newcommand{\Q}{\mathbb{Q}}
\newcommand{\R}{\mathbb{R}}
\newcommand{\N}{\mathbb{N}}
\newcommand{\Z}{\mathbb{Z}}
\newcommand{\T}{\mathbb{T}}
\newcommand{\vol}{\mathrm{Vol}}
\newcommand{\jac}{\mathrm{Jac}}
\newtheorem{theorem}{Theorem}[section]
\newtheorem{corollary}[theorem]{Corollary}
\newtheorem{proposition}[theorem]{Proposition}
\newtheorem{lemma}[theorem]{Lemma}
\newtheorem{definition}[theorem]{Definition}
\theoremstyle{remark}
\newtheorem{remark}[theorem]{Remark}
\begin{document}

\thanks{}

\author{J. Santana C Costa}
\address{DEMAT-UFMA S\~{a}o Lu\'{i}s-MA, Brazil.}
\email{jsc.costa@ufma.br}

\author{F. Micena}
\address{
  IMC-UNIFEI Itajub\'{a}-MG, Brazil.}
\email{fpmicena82@unifei.edu.br}


\renewcommand{\subjclassname}{\textup{2000} Mathematics Subject Classification}

\date{\today}

\setcounter{tocdepth}{2}
\title{On Lyapunov exponents properties of special Anosov endomorphisms on $\mathbb{T}^d$}
\maketitle
\begin{abstract}
This work is addressed to study Anosov endomorphisms of $\mathbb{T}^d,$ $d\geq 3.$ We are interested in obtaining metric and topological information on such Anosov endomorphism by comparison between their Lyapunov exponents and the ones of its linearization. We can characterize when a weak unstable foliation of a special Anosov endomorphism near in linear is an absolutely continuous foliation. Also, we show that in dimension $d \geq 3,$ it is possible to find a smooth special Anosov endomorphism being conservative but not Lipschitz conjugate with its linearization, in contrast with the smooth rigidity in dimension two.
\end{abstract}

\section{Introduction}

In the 1970s two works Ma\~n\'e-Pugh  \cite{MP75} and Przytycki \cite{PRZ} generalized the notion of Anosov diffeomorphism for non-invertible maps, introducing the notion of Anosov endomorphism. They show that there is a big difference among these notions about structural stability, this is, they show that the Anosov endomorphism does not have structural stability as the Anosov diffeomorphism. The main reason for this is that as in Anosov endomorphisms, there are several pre-orbits, then at the point, there can be more than one unstable direction.

Let $M$ be a $C^{\infty}$ closed manifold and $f:M\rightarrow M$ be a $C^1$ local diffeomorphism.  We say that $f$ is an Anosov endomorphism if for every $(x_n)_{n \in \mathbb{Z}},$ an $f-$orbit, this is $f(x_n)=x_{n+1}$,  there is a splitting

$$T_{x_i} M = E^s_{x_i} \oplus E^u_{x_i}, \forall i \in \mathbb{Z},$$
such that $Df$ is uniformly expanding on $E^u$ and uniformly contracting on $E^s.$

The direction $E^s$ is well defined, but we can have  $(x_n)_{n \in \mathbb{Z}}$ and  $(y_n)_{n \in \mathbb{Z}}$ two different orbits of $f$ with $x_0=y_0$ such that $E^u_f(x_0)\neq E^u_f(y_0) $.   By \cite{PRZ}, these directions are integrable to unstable local discs, so at a point $x$ we can have more than one local unstable manifold. When the Anosov endomorphism has only one direction $E^u$ in each point $x\in M$ is called a special Anosov endomorphism, for instance, all linear Anosov endomorphism is special.

The works \cite{F70} and \cite{M74} show that all Anosov diffeomorphism of torus ${\T}^n$ is conjugated to a linear Anosov. A similar result there is for Anosov endomorphism, In \cite{MT19} and \cite{sumi} show that an Anosov endomorphism $f$ of torus ${\T}^n$ is conjugated to a linear Anosov if, and only if, $f$ is special.

One of our main goals is to relate the behavior of the Lyapunov exponents of Anosov endomorphisms with the Lyapunov exponents of its linearization. The Lyapunov exponents play an important role in the ergodic theory and dynamical systems. They are a useful tool of the Pesin theory, in the study of
entropy, and equilibrium states among others. The existence of these exponents is guaranteed by celebrated Osceledec's Theorem.

Given $A:{\T}^d\rightarrow {\T}^d$  a linear Anosov endomorphism which is irreducible over ${\Q}$ such that $\dim E^s_A=1$ and  $E^u_A=E^u_1\oplus E^u_2\oplus\cdots \oplus E^u_{d-1}$ we consider $f:{\T}^d\rightarrow {\T}^d$  a $C^2$  Anosov endomorphism conjugated to $A$ by  $h:{\T}^d\rightarrow {\T}^d$, that is, $h \circ A = f \circ h.$ We denote $\tilde{m} = h_{\ast}(m).$ Since $m$ is ergodic for $A$ and $h$ is conjugacy, the measure $\tilde{m}$ is ergodic for $f.$  Fixed $i\geq 1,$ we will see in the preliminaries that are defined intermediate Lyapunov exponents $ \lambda^u_i(f,x),$ for $\tilde{m}-$a.e $x \in \mathbb{T}^d.$ Since $\tilde{m}$ is ergodic the value $\lambda^u_i(f,x)$ is $\tilde{m}-$a.e constant and it will be denoted by $\lambda^u_i(f, \tilde{m}).$ We show that


\begin{maintheorem}\label{TeoA}
	Suppose that  $A:{\T}^d\rightarrow {\T}^d$ be a linear Anosov endomorphism, 
	then there is a $C^1$ neighborhood $\mathcal{U}$ of $A$, such that for every $C^r,\, r\geq 2,$ Anosov endomorphism $f\in\mathcal{U}$ conjugated to $A$, either the set $Z=\{x\in{\T}^d;\, \sum_{i = 1}^{d-1} \lambda^u_i(f,x) =  \sum_{i = 1}^{d-1} \lambda^u_i(f, \tilde{m})\}$
	meets every unstable leaf in a Lebesgue null set of the leaf or
	$$
	\displaystyle\sum_{i = 1}^{d-1} \lambda^u_i(f, \tilde{m}) =  \sum_{i = 1}^{d-1} \lambda^u_i(A).
	$$
In the last case, the conjugacy $h$ is absolutely continuous.
\end{maintheorem}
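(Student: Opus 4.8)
The plan is to reduce the statement to a rigidity dichotomy for the H\"older potential $\phi(x) = \log\jac(Df|_{E^u_x})$, whose forward Birkhoff averages compute the full unstable exponent sum $\sum_{i=1}^{d-1}\lambda^u_i(f,x)$. First I would record two facts about $\tilde m = h_\ast m$. Since $m$ is the Lebesgue (Haar) measure, which for the linear map $A$ is simultaneously the measure of maximal entropy and the SRB measure, and since a topological conjugacy preserves entropy, $\tilde m$ is the measure of maximal entropy of $f$; hence $h_{\tilde m}(f) = h_{\mathrm{top}}(f) = h_{\mathrm{top}}(A) = \sum_{i=1}^{d-1}\lambda^u_i(A)$, while by Oseledets $\sum_{i=1}^{d-1}\lambda^u_i(f,\tilde m) = \int\phi\,d\tilde m$. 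The second ingredient is the SRB theory: for $f$ of class $C^2$ and $C^1$-close to $A$ the bundle $E^u$ is well defined (specialness being forced by the conjugacy through \cite{MT19,sumi}), and $f$ admits a unique ergodic SRB measure $\mu$ whose conditionals along unstable leaves are equivalent to leaf-Lebesgue, for which leaf-Lebesgue-almost every point of every unstable leaf is $\mu$-generic.

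The dichotomy then falls out by comparing two averages of $\phi$. Fix an unstable leaf $W$. By the physical property of $\mu$, for $\vol_W$-a.e.\ $x\in W$ one has $\lim_n \frac1n\sum_{k=0}^{n-1}\phi(f^kx) = \int\phi\,d\mu = \sum_{i=1}^{d-1}\lambda^u_i(f,\mu)$. Consequently, if $Z\cap W$ is not $\vol_W$-null, the value defining $Z$ must coincide with this generic value, giving $\sum_{i=1}^{d-1}\lambda^u_i(f,\tilde m) = \sum_{i=1}^{d-1}\lambda^u_i(f,\mu)$; and once this holds on a single leaf it holds on every leaf. So either $Z$ meets every unstable leaf in a null set, or $\int\phi\,d\tilde m = \int\phi\,d\mu$.

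In the second alternative I would chain the standard inequalities
\[
\int\phi\,d\mu = h_\mu(f) \le h_{\mathrm{top}}(f) = h_{\tilde m}(f) \le \int\phi\,d\tilde m ,
\]
where the first equality is Pesin's entropy formula for the SRB measure $\mu$, the first inequality is the variational principle, the middle equality uses that $\tilde m$ is the measure of maximal entropy, and the last inequality is Ruelle's inequality applied to $\tilde m$. Since the two ends are equal, every step is an equality; in particular $h_{\tilde m}(f) = \int\phi\,d\tilde m$, so $\tilde m$ satisfies Pesin's formula and is therefore itself an SRB measure, whence $\tilde m = \mu$ by uniqueness. Reading off the entropy gives $\sum_{i=1}^{d-1}\lambda^u_i(f,\tilde m) = h_{\mathrm{top}}(f) = \sum_{i=1}^{d-1}\lambda^u_i(A)$, the asserted equality.

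Finally, absolute continuity of $h$ comes from the identification $\tilde m = \mu$. On one hand, the conditionals of $\tilde m = h_\ast m$ along the unstable leaves of $f$ are the $h$-images of the conditionals of $m$ along the affine unstable leaves of $A$, which are leaf-Lebesgue; on the other hand, as $\tilde m=\mu$ is SRB, those same conditionals are equivalent to leaf-Lebesgue for $f$. Thus $h$ carries leaf-Lebesgue to an equivalent measure, i.e.\ $h$ is absolutely continuous along the unstable foliation, and the one-dimensional stable direction can then be incorporated through a Journ\'e-type holonomy argument. I expect the main obstacles to be two technical points proper to the non-invertible setting: establishing the SRB machinery and, above all, the leaf-Lebesgue genericity of $\mu$ for the special endomorphism $f$ (unstable manifolds a priori depend on the chosen pre-orbit, so the argument must be carried out in the inverse limit), together with the disintegration bookkeeping that upgrades equivalence of unstable conditionals to absolute continuity of the conjugacy.
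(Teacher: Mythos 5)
Your proposal is correct in outline, but it takes a genuinely different route from the paper at the decisive step. The paper never invokes existence, uniqueness, or physicality of an SRB measure for $f$: it proves the key inequality $\sum_{i}\lambda^u_i(f,\tilde m)\le\sum_i\lambda^u_i(A)$ by a bare-hands volume-growth comparison in the universal cover. Assuming the pointwise exponent sum exceeds $\sum_i\lambda^u_i(A)$ on a positive leaf-measure subset of some unstable leaf, integrating the unstable Jacobian over that subset forces the leaf volume of $\tilde f^n(\pi^{-1}(U_r))$ to grow at rate at least $(1+\frac1q)^n e^{n\sum_i\lambda^u_i(A)}$, while Proposition \ref{Prop vol untable} --- proved via quasi-isometry of $\widetilde W^u_f$ (Brin's criterion), the uniform bi-Lipschitz orthogonal projection $\pi:\widetilde W^u_f(z)\to\widetilde E^u_A$, and Hammerlindl-type large-scale comparison of $\tilde f$ with $\tilde A$ --- caps that growth at $C_0(1+\eps)^{nd_u}e^{n\sum_i\lambda^u_i(A)}$, a contradiction. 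The reverse inequality is Ruelle's inequality exactly as in your chain, and the endgame is also the same: the resulting Pesin formula for $\tilde m$ yields the SRB property via Qian--Zhu, hence absolutely continuous unstable conditionals. Your substitute for the volume-growth estimate --- leafwise genericity of a unique physical SRB measure $\mu$, so that positive leaf measure of $Z$ forces $\sum_i\lambda^u_i(f,\tilde m)=\int\phi\,d\mu$, after which the entropy chain closes --- is legitimate (and in fact you need less than uniqueness: equality of the two ends of your chain already gives Pesin's formula for $\tilde m$, hence the SRB property, without identifying $\tilde m=\mu$). What your route buys is brevity and conceptual transparency; what it costs is that the full weight lands on nontrivial inputs in the non-invertible setting --- existence of an ergodic SRB measure with full-volume basin and leaf-Lebesgue genericity on every unstable leaf, via absolute continuity of $W^s_f$ and the inverse-limit formalism --- which you correctly flag as the technical core; these are extractable from Qian--Zhu and Qian--Xie--Zhu, but they are heavier machinery than the paper's self-contained geometric estimate, which moreover is reused in the proof of Theorem \ref{TeoC}. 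One last difference of detail: for the transverse direction in the absolute continuity of $h$, the paper argues through $C^1$-regularity of $h$ along the one-dimensional stable leaves together with absolute continuity of $W^s_f$ and $W^u_f$, rather than your Journ\'e-type holonomy remark; both treatments are at a comparable level of sketchiness.
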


A very researched common question is about the regularity of the conjugacy between $f$ and $A$. Related to above Theorem, in  \cite{Mic22measurable} show that

\begin{theorem}[\cite{Mic22measurable}]\label{TeoM22}
	Let  $f:{\T}^2\rightarrow {\T}^2$ be an smooth Anosov endomorphism with linearization $A$,
	such that $f$ and $A$ are conjugated by $h$, for which $h\circ f=A\circ h$.
	Consider $Z=\{x\in{\T}^2;\, \lambda_f^u(x)=\lambda^u_f(\tilde{m})\}$, then either $Z$
	meets every unstable leaf in a Lebesgue null set of the leaf, or $Z={\T}^2$, and in this
	last case, $f$ and $A$ are smoothly conjugated.
\end{theorem}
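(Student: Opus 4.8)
The plan is to reduce the statement to a single scalar cocycle --- the logarithm of the unstable Jacobian --- and then to play off the SRB measure of $f$ against Ruelle's inequality. Since $f$ is conjugate to $A$, it is special by \cite{MT19,sumi}, so $E^u_x$ is defined at each point $x$ and, after shrinking $\mathcal U$, depends continuously on $x$; hence $\psi(x)=\log\bigl|\det(Df|_{E^u_x})\bigr|$ is a well-defined continuous function. Because $Df(E^u_x)=E^u_{f(x)}$, one has $\frac1n\sum_{k=0}^{n-1}\psi(f^k(x))=\frac1n\log\bigl|\det(Df^n|_{E^u_x})\bigr|$, so by Oseledets' theorem the Birkhoff average $\overline{\psi}(x):=\lim_n\frac1n\sum_{k=0}^{n-1}\psi(f^k(x))$ equals $\sum_{i=1}^{d-1}\lambda^u_i(f,x)$ wherever it exists, and $c:=\sum_{i=1}^{d-1}\lambda^u_i(f,\tilde m)=\int\psi\,d\tilde m$. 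Thus $Z$ is precisely the set on which the Birkhoff average of $\psi$ equals $c$.

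Let $\mu_{\mathrm{SRB}}$ be the SRB measure of the transitive $C^2$ special Anosov endomorphism $f$, and set $c_{\mathrm{SRB}}=\int\psi\,d\mu_{\mathrm{SRB}}$. The basic property of the SRB measure along unstable leaves is that for every unstable leaf $W$ and Lebesgue-almost every $x\in W$ the empirical measures $\frac1n\sum_{k=0}^{n-1}\delta_{f^k(x)}$ converge to $\mu_{\mathrm{SRB}}$; in particular $\overline{\psi}(x)=c_{\mathrm{SRB}}$ for leaf-Lebesgue almost every $x\in W$. Consequently, if $c\neq c_{\mathrm{SRB}}$ then $Z\cap W$ is disjoint from a set of full leaf-Lebesgue measure, so $Z$ meets every unstable leaf in a Lebesgue null set; this is the first alternative.

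It remains to show that the complementary case $c=c_{\mathrm{SRB}}$ forces both the exponent equality and the absolute continuity. The key is the chain
\[
c_{\mathrm{SRB}}\ \le\ \sum_{i=1}^{d-1}\lambda^u_i(A)\ \le\ c .
\]
The right-hand inequality is Ruelle's inequality for $\tilde m$, $h_{\tilde m}(f)\le c$, combined with $h_{\tilde m}(f)=h_m(A)=\sum_{i=1}^{d-1}\lambda^u_i(A)$, where the first equality is conjugacy invariance of entropy and the second is Pesin's formula for the smooth (Haar) measure $m$ of the linear map $A$. For the left-hand inequality, the entropy formula for SRB measures gives $c_{\mathrm{SRB}}=h_{\mu_{\mathrm{SRB}}}(f)$, while the variational principle and $h_{\mathrm{top}}(f)=h_{\mathrm{top}}(A)=\sum_{i=1}^{d-1}\lambda^u_i(A)$ give $h_{\mu_{\mathrm{SRB}}}(f)\le\sum_{i=1}^{d-1}\lambda^u_i(A)$ (the last equality because, $A$ being linear, every invariant measure has $\sum\lambda^u_i=\log|\det(A|_{E^u})|$, so Ruelle's inequality caps $h_{\mathrm{top}}(A)$ from above while Haar attains it). Hence $c=c_{\mathrm{SRB}}$ collapses the chain to equalities, giving $\sum_{i=1}^{d-1}\lambda^u_i(f,\tilde m)=\sum_{i=1}^{d-1}\lambda^u_i(A)$.

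The collapse of the chain also yields $h_{\tilde m}(f)=\sum_{i=1}^{d-1}\lambda^u_i(f,\tilde m)$, i.e.\ equality in Ruelle's inequality for $\tilde m$. By the Ledrappier--Young characterization of this equality, $\tilde m$ has absolutely continuous conditional measures on unstable leaves. Since $\tilde m=h_*m$, the conditionals of $m$ on the linear unstable leaves of $A$ are leaf-Lebesgue, and $h$ maps unstable leaves of $A$ to unstable leaves of $f$, the map $h$ pushes leaf-Lebesgue forward to absolutely continuous measures; that is, $h$ is absolutely continuous along the unstable foliation, and a Fubini argument using absolute continuity of the one-dimensional stable foliation of $f$ upgrades this to absolute continuity of $h$. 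The main obstacle is carrying the SRB/Ledrappier--Young apparatus --- existence of $\mu_{\mathrm{SRB}}$, its basin property on unstable leaves, and the equality case of Ruelle's inequality --- into the non-invertible setting, where the multiplicity of pre-orbits is a genuine difficulty; the hypothesis that $f$ is special is exactly what makes the unstable-leaf dynamics behave as in the diffeomorphism case and keeps these tools available, the last delicate point being the upgrade from leafwise to global absolute continuity of $h$.
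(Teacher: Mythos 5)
Your proposal has a genuine gap: it proves a strictly weaker statement than the one quoted. Note first that this paper does not reprove Theorem \ref{TeoM22} (it is imported from \cite{Mic22measurable}); the closest in-paper argument is the proof of Theorem \ref{TeoA}, and for that part your route is a legitimate alternative: where the paper rules out a positive-measure intersection with an exponent excess via the volume-growth estimate along quasi-isometric unstable leaves (Proposition \ref{Prop vol untable}) plus Ruelle's inequality, you use the basin property of $\mu_{\mathrm{SRB}}$ on unstable leaves to get the same dichotomy, and the entropy chain $c_{\mathrm{SRB}}\le\lambda^u_A\le c$ to get the exponent equality and the SRB property of $\tilde m$. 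But the second alternative of Theorem \ref{TeoM22} asserts that $Z=\T^2$ --- the unstable exponent equals $\lambda^u_f(\tilde m)$ at \emph{every} point, not just a.e. --- and that $h$ is \emph{smooth}. You stop at the a.e. exponent equality and absolute continuity of $h$, which is exactly the conclusion of Theorem \ref{TeoA}. The missing upgrade is not a routine consequence of the Pesin/Ledrappier--Young apparatus you invoke: Theorem \ref{TeoB} of this very paper exhibits, in dimension three, a conservative special Anosov endomorphism whose conjugacy is absolutely continuous but not even Lipschitz, so any passage from absolute continuity to smoothness must use $d=2$ in an essential way.

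Concretely, what is missing is the two-dimensional rigidity step: from the SRB property of $\tilde m$ and $\dim E^u=1$ one gets smooth positive conditional densities along unstable leaves, hence $h$ is $C^{1+}$ (and then smooth, by bootstrapping) along the one-dimensional unstable foliation; an Anosov closing/Liv\v{s}ic-type argument forces the periodic Jacobian data $Jf^n(p)=\deg(f)^n$ at every periodic point (this is the mechanism behind Lemma \ref{lemaperiodico}, and for the stable exponent one uses \cite{AGGS22}-type rigidity or the regularity of $h$ along stable leaves), after which a Journ\'e-type argument in the two transverse one-dimensional directions yields $h\in C^\infty$. Only then does $Z=\T^2$ follow --- as an \emph{output} of smoothness, since a smooth conjugacy makes $\lambda^u_f(x)=\lambda^u_A$ for all $x$ --- whereas in your write-up $Z=\T^2$ is never addressed. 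A secondary, smaller point: the basin property of $\mu_{\mathrm{SRB}}$ on \emph{every} unstable leaf needs the stable-saturation of the basin plus transverse absolute continuity of the stable foliation, carried out in the endomorphism category (\cite{QZ}, \cite{QXZ}); you flag this but do not supply it. As written, your argument establishes the $\T^2$ instance of Theorem \ref{TeoA}, not Theorem \ref{TeoM22}.
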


\begin{theorem}[\cite{Mic22measurable}]\label{Cor01}
	Let  $f:{\T}^2\rightarrow {\T}^2$ be an smooth Anosov endomorphism with linearization $A$, such that $f$ and $A$ are conjugated. If $f$ is volume preserving, then the conjugacy is $C^{\infty}$.
\end{theorem}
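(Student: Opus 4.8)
The plan is to invoke the dichotomy of Theorem \ref{TeoM22} and to rule out its first alternative using the conservativity of $f$. Since $f$ is $C^\infty$, conjugate to $A$ (hence special by \cite{MT19,sumi}), and preserves Lebesgue measure $m$, the measure $m$ is ergodic and the unstable foliation $W^u_f$ is a genuine, absolutely continuous foliation, as for any $C^2$ conservative Anosov system. By Oseledets and ergodicity, $\lambda^u_f(x)=\lambda^u_f(m)$ for $m$-almost every $x$, so $\Lambda:=\{x\in\T^2:\lambda^u_f(x)=\lambda^u_f(m)\}$ has full Lebesgue measure; by absolute continuity of $W^u_f$ and Fubini it meets Lebesgue-almost every unstable leaf in a subset of full leaf-measure, and in particular it is not Lebesgue-null on every leaf.

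The crux is to identify $\lambda^u_f(m)$ with $\lambda^u_f(\tilde m)$, so that $\Lambda$ agrees, up to leaf-null sets, with the set $Z$ of Theorem \ref{TeoM22}; once this is known, $Z$ is not null on every leaf, the first alternative is excluded, and the theorem forces $Z=\T^2$ and a $C^\infty$ conjugacy. I would establish the identification through entropy. On one side, entropy is a conjugacy invariant and $m$ realizes the topological entropy of the linear model, so $\tilde m=h_* m$ is a measure of maximal entropy for $f$; thus $h_{\tilde m}(f)=h_{\mathrm{top}}(f)=h_{\mathrm{top}}(A)=\lambda^u(A)$, and Ruelle's inequality gives $\lambda^u_f(\tilde m)\ge\lambda^u(A)$. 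On the other side, $m$ is a smooth measure, so Pesin's entropy formula for endomorphisms yields $h_m(f)=\lambda^u_f(m)\le h_{\mathrm{top}}(f)=\lambda^u(A)$, while the invariance $f_* m=m$, disintegrated over the fibres of $f$, gives $\lambda^u_f(m)+\lambda^s_f(m)=\int\log|\det Df|\,dm=F_m(f)\le\log|\det A|=\lambda^u(A)+\lambda^s(A)$, where $F_m(f)$ denotes the folding entropy.

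The main obstacle is to upgrade the chain $\lambda^u_f(m)\le\lambda^u(A)\le\lambda^u_f(\tilde m)$ to the equality $\lambda^u_f(m)=\lambda^u_f(\tilde m)$, that is, to prove that the conservative measure $m$ is itself a measure of maximal entropy. This step is where the endomorphism structure is essential: for conservative Anosov diffeomorphisms the volume-preserving hypothesis fixes only the sum of the exponents and the analogous rigidity fails, whereas here I expect the folding-entropy identity above, combined with the rigidity of the stable data forced by the irreducibility of $A$ (which makes the stable foliation minimal), to pin $\lambda^s_f(m)$ and $F_m(f)$ to their linear values $\lambda^s(A)$ and $\log|\det A|$, and hence $\lambda^u_f(m)=\lambda^u(A)$. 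With $\lambda^u_f(m)=\lambda^u_f(\tilde m)$ in hand, Theorem \ref{TeoM22} applies to $\Lambda=Z$ and the conjugacy is $C^\infty$.
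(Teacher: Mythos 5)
Your overall strategy---reduce to the dichotomy of Theorem~\ref{TeoM22} by showing that volume is the measure of maximal entropy---is viable, but there is a genuine gap exactly where you flag ``the main obstacle'': nothing in your argument actually establishes $\lambda^u_f(m)=\lambda^u_f(\tilde m)$, and the two mechanisms you propose cannot deliver it. Minimality of the stable foliation (from irreducibility, which in dimension two is automatic) carries no Lyapunov-exponent information: the stable foliation of a conservative Anosov \emph{diffeomorphism} of $\T^2$ is equally minimal, and there the analogous rigidity fails, as you yourself observe; so minimality cannot be what ``pins'' $\lambda^s_f(m)$. As for the folding-entropy identity, the Jensen-type bound for a degree-$d$ volume-preserving covering only gives $\int_{\T^2}\log \jac f\,dm\le \log d$, i.e.\ $\lambda^u_f(m)+\lambda^s_f(m)\le\lambda^u(A)+\lambda^s(A)$; even granted $\lambda^s_f(m)=\lambda^s(A)$, this merely reproduces the upper bound $\lambda^u_f(m)\le\lambda^u(A)$ that you already have from Pesin's formula plus the variational principle. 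Your chain $\lambda^u_f(m)\le\lambda^u(A)\le\lambda^u_f(\tilde m)$ consists of two inequalities pointing the same way; what is missing is the lower bound $\lambda^u_f(m)\ge\lambda^u(A)$, and no soft entropy or foliation argument in your sketch produces it.

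The missing input is the periodic-data rigidity specific to non-invertible Anosov maps, which this paper itself uses elsewhere (proofs of Theorems~\ref{TeoB} and~\ref{TeoC}). By Lemma~\ref{lem1}, volume preservation forces $\jac f^n(p)=d^n$ at every periodic point, hence $\lambda^u_f(p)+\lambda^s_f(p)=\log d$; by Theorem~1.1 of \cite{AGGS22}, a special (equivalently, conjugated) Anosov endomorphism of $\T^2$ has rigid stable periodic data, $\lambda^s_f(p)=\lambda^s_A$ for all $p\in \mathrm{Per}(f)$; together $\lambda^u_f(p)=\lambda^u_A$ at every periodic point. Since $f$ is special, $E^u_f$ is a continuous line bundle, so $\lambda^u_f(\mu)=\int\log\|Df|_{E^u}\|\,d\mu$, and density of periodic measures (Anosov closing) upgrades the periodic equality to $\lambda^u_f(\mu)=\lambda^u_A$ for \emph{every} invariant measure $\mu$; in particular $\lambda^u_f(m)=\lambda^u_A$, Pesin gives $h_m(f)=\lambda^u_A=h_{\mathrm{top}}(f)$, uniqueness of the measure of maximal entropy gives $m=\tilde m$, and then your dichotomy argument closes correctly. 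Note finally that the paper offers no proof of this statement---it is quoted from \cite{Mic22measurable}---and within the paper's own toolkit it is an immediate consequence of Lemma~\ref{lemaperiodico}: a volume-preserving $f$ preserves an absolutely continuous measure, so item~(1) there holds and item~(2) is the conclusion. Your entropy route, once repaired as above, amounts to reproving that implication rather than applying it.
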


One natural question is if is possible to obtain smooth conjugacy in Theorem \ref{TeoA} as we have in the Theorem \ref{TeoM22} and  \ref{Cor01}. We show that this is not possible in dimensions greater than 2 (Theorem \ref{TeoB} below). We observe that it shows that there is a substantial difference in the rigidity context when one passes from dimension two to higher dimensions.

\begin{maintheorem}\label{TeoB}
		There is a smooth Anosov endomorphism $f:{\T}^3\rightarrow {\T}^3$ conjugated with its linearization $A$ such that the set  $Z=\{x\in{\T}^3;\, \lambda_f^{wu}(x)+\lambda_f^{su}(x)=\lambda^{wu}_f(\tilde{m})+\lambda^{su}_f(\tilde{m})\}$ intersect some unstable leaf in a set positive Lebesgue measure of the leaf, moreover the conjugacy $h$ is absolutely continuous but the conjugacy \mbox{$h$ is not $C^1$}, in fact, $h$ is not even Lipschitz.
\end{maintheorem}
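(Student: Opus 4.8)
The plan is to build the example by hand: a conservative special endomorphism whose unstable Jacobian is cohomologically trivial (forcing absolute continuity through Theorem~\ref{TeoA}) but whose weak unstable expansion at a fixed point disagrees with that of $A$ (forcing the failure of the Lipschitz property). First I would fix the linear model: choose an integer matrix $A$ inducing an Anosov endomorphism of $\T^3$ that is irreducible over $\Q$, has simple real eigenvalues $\mu_s,\mu_{wu},\mu_{su}$ with $0<|\mu_s|<1<|\mu_{wu}|<|\mu_{su}|$ and $|\det A|=|\mu_s\mu_{wu}\mu_{su}|=k\ge 2$ (such matrices exist, e.g. companion matrices of suitable totally real irreducible cubics). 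Denote by $E^s_A,E^{wu}_A,E^{su}_A$ the three constant eigen-line fields and by $W^{s}_A,W^{wu}_A,W^{su}_A$ the associated linear foliations of $\T^3$. This places us in the hypotheses of Theorem~\ref{TeoA}, so there is a $C^1$ neighborhood $\U$ of $A$ for which the dichotomy holds.

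Next I would construct $f$. Let $p$ be the fixed point $0$ and pick a smooth $\rho:\T^3\to(0,\infty)$, equal to $1$ outside a small ball around $p$, with $\rho(p)\in(1/|\mu_{wu}|,1)$ and $|\rho-1|$ small. I would produce a volume preserving diffeomorphism $\Psi$ of $\T^3$, $C^1$-close to the identity, preserving each of the three linear foliations and acting on them with multipliers $(1,\rho,\rho^{-1})$ in the eigenbasis, and set $f=\Psi\circ A$. Then $Df$ preserves the constant splitting $E^s_A\oplus E^{wu}_A\oplus E^{su}_A$ with entries $(\mu_s,\rho\,\mu_{wu},\rho^{-1}\mu_{su})$; for $\rho$ close to $1$ these keep $f$ uniformly hyperbolic, so $f\in\U$ is an Anosov endomorphism. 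Since $Df$ leaves invariant the globally defined two-plane field $E^u_A=E^{wu}_A\oplus E^{su}_A$, the unstable bundle of $f$ is orbit-independent, i.e. $f$ is special; by \cite{MT19,sumi} it is conjugate to $A$, say $h\circ A=f\circ h$, and we write $\tilde m=h_\ast m$. Moreover $|\det D\Psi|\equiv 1$ gives $|\det Df|\equiv k$, so $f$ has the same constant Jacobian and degree as $A$ and is conservative, while its unstable Jacobian is the constant $|\det (Df|_{E^{u}})|\equiv |\mu_{wu}\mu_{su}|$.

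Absolute continuity then follows formally. Since $\log|\det (Df|_{E^{u}})|$ is constant it is (trivially) cohomologous to a constant, so $\sum_{i}\lambda^u_i(f,\tilde m)=\log|\mu_{wu}\mu_{su}|=\sum_i\lambda^u_i(A)$; this is the second alternative in Theorem~\ref{TeoA}, whence $h$ is absolutely continuous. Consequently $\tilde m\ll\mathrm{Leb}$, and because $Z$ has full $\tilde m$-measure (being the set where the $\tilde m$-ergodic averages attain their mean), $Z$ meets unstable leaves in sets of positive Lebesgue measure of the leaf.

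Finally, non-Lipschitzness comes from the periodic data at $p$. The conjugacy carries $W^{wu}_A(0)$ onto $W^{wu}_f(p)$ and intertwines the restricted dynamics, which near the fixed points are, up to a smooth change of coordinates, multiplication by $\mu_{wu}$ and by $\nu_{wu}:=\rho(p)\mu_{wu}$; thus in a leaf chart $h(\mu_{wu}x)=\nu_{wu}h(x)$. Iterating backward, the difference quotient $h(\mu_{wu}^{n}x_0)/(\mu_{wu}^{n}x_0)=(\nu_{wu}/\mu_{wu})^n\,h(x_0)/x_0$ tends to $\infty$ as $n\to-\infty$ because $\nu_{wu}<\mu_{wu}$; hence $h$ has unbounded difference quotients at $p$ and is not Lipschitz, a fortiori not $C^1$. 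The main obstacle is the construction of $\Psi$: realizing a prescribed diagonal-in-eigenbasis, volume preserving derivative as an honest torus diffeomorphism requires meeting an integrability (exactness) constraint while keeping the perturbation small, and, most delicately, guaranteeing that the resulting $f$ genuinely remains special, since arbitrary small perturbations of an endomorphism typically create orbit-dependent unstable directions; keeping $E^u$ globally well defined is exactly what makes the example work.
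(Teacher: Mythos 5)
The decisive step of your plan --- a diffeomorphism $\Psi$ of $\T^3$ acting on the three eigen-line fields with multipliers $(1,\rho,\rho^{-1})$ for a non-constant bump function $\rho$ --- cannot exist, and you only flag this as ``the main obstacle'' without resolving it. Since the three invariant bundles are one-dimensional, preserving each linear foliation with prescribed multipliers means exactly that $D\Psi$ is diagonal in the constant eigenbasis; but then, in adapted coordinates $(a,b,c)$, the second component of $\Psi$ has vanishing partial derivatives in $a$ and $c$, so it is a function of $b$ alone and $\rho=\partial_b\Psi_2$ depends only on $b$, while symmetrically $\rho^{-1}=\partial_c\Psi_3$ depends only on $c$; hence $\rho$ is constant, even before any global considerations. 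Periodicity kills the construction independently: because $A$ is irreducible over $\Q$, its eigendirections are totally irrational, the leaves of $W^{wu}_A$ are dense in $\T^3$, and a continuous function constant along them must be constant, so no multiplier localized near the fixed point $p$ can exist. With $\rho$ constant, volume preservation forces $\rho\equiv 1$ and $f=A$. Everything downstream in your argument --- the constant unstable Jacobian giving the second alternative of Theorem \ref{TeoA}, and above all the multiplier mismatch $\nu_{wu}=\rho(p)\mu_{wu}\neq\mu_{wu}$ at $p$ that drives your non-Lipschitz computation --- rests on this unrealizable perturbation, so the proposal has a fatal gap at its core.

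It is instructive that the paper's proof is organized precisely to circumvent this rigidity. There one perturbs $A$ by a local volume-preserving Baraviera--Bonatti perturbation $\phi$ (Proposition \ref{babo}) which preserves only $E^s$ and deliberately does \emph{not} preserve $E^{wu}$ or $E^{su}$, producing the strict inequality $\lambda^{wu}_f(m)>\lambda^{wu}_A$. Specialness is then no longer automatic and is recovered indirectly: $\phi$ leaves the stable data of periodic points unchanged, so Theorem 1.1 of \cite{AGGS22} yields that $f$ is special, and \cite{MT19} gives the conjugacy $h$. The equality defining $Z$ holds at every periodic point because $f$ is conservative ($\lambda^s+\lambda^{wu}+\lambda^{su}=\log d$ with $\lambda^s_f(p)=\lambda^s_A$), and it propagates to $m$-a.e.\ point by the Anosov closing lemma; the Pesin formula then makes $m$ the measure of maximal entropy, so $h_\ast(m)=m$ and $h$ is absolutely continuous. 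Finally, non-Lipschitzness is obtained not from a fixed-point multiplier but from entropy along the expanding foliation: a Lipschitz $h$ would carry absolutely continuous conditional measures along $wu$-leaves to the same, forcing $\int_{\T^3}\log(\jac^{wu}f)\,dm=\lambda^{wu}_A$, contradicting the strict increase. If you want to salvage your (otherwise reasonable) periodic-data obstruction, you need a perturbation that changes the $wu$-data while keeping $f$ special and the total unstable sum rigid --- which is exactly what the combination of \cite{BB} and \cite{AGGS22} in the paper delivers.
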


The next theorem establishes a relation between the absolute continuity of intermediate invariant expanding foliations and the sum of the corresponding Lyapunov exponents. In the preliminaries section, we will describe in detail the definition of intermediate bundles $E^{wu}_{f,1,k}.$

\begin{maintheorem}\label{TeoC}
Let  $A:{\T}^d\rightarrow {\T}^d$ be a linear Anosov endomorphism which is irreducible over ${\Q}$ such that $\dim E^s_A=1$ and  $E^u_A=E^u_1\oplus E^u_2\oplus\cdots \oplus E^u_{d-1},$ a direct and dominated splitting one-dimensional bundles. Then there is a $C^1$ neighborhood $\mathcal{U}$ of $A$, such that for any $f\in\mathcal{U}$ a special Anosov endomorphism is defined an expanding quasi-isometric foliation $W^{wu}_{1,k},$\, $k=1,\ldots,d-1$ tangent to $k-$dimensional intermediate and $Df-$expanding bundle $E^{wu}_{f,1,k}.$ Moreover if $f\in\mathcal{U}$ is a $C^r$, $r\geq 2$, a conservative and special Anosov endomorphism, $W^{wu}_{1,k}$ is absolutely continuous if and only if
$$
\displaystyle\sum_{i = 1}^{k} \lambda^u_i(f, m) =  \sum_{i = 1}^{k} \lambda^u_i(A).
$$

\end{maintheorem}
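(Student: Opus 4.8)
The plan is to treat the two assertions separately: first the construction of the foliation $W^{wu}_{1,k}$ together with its expanding and quasi-isometric character, and then the equivalence between absolute continuity and the equality of Lyapunov sums.

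For the first part I would exploit the dominated splitting of $A$. Setting $E^u_{1,k}=E^u_1\oplus\cdots\oplus E^u_k$, each such sub-bundle is dominated, hence is cut out by an invariant cone field that persists under $C^1$ perturbations; for a special $f\in\mathcal{U}$ the unstable bundle $E^u_f$ is single valued, so its continuation $E^{wu}_{f,1,k}$ is a well-defined $Df$-invariant $k$-dimensional expanding sub-bundle, $C^0$-close to $E^u_{1,k}$. Rather than integrating it directly, I would pass through the linearization: since $f$ is special it is topologically conjugate to $A$ by the results of Micena--Tahzibi and Sumi, $h\circ f=A\circ h$, and the image $W^{wu}_{1,k}:=h\big(W^u_{1,k}(A)\big)$ of the linear (affine) foliation is a continuous foliation tangent to $E^{wu}_{f,1,k}$, with genuine leaves because $f$ is special. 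Lifting to $\R^d$ we have $\tilde h=\mathrm{Id}+b$ with $b$ bounded, and since the linear leaves are affine $k$-planes, hence quasi-isometric, the images under $\tilde h$ stay quasi-isometric. Irreducibility of $A$ over $\Q$ is what guarantees that the one-dimensional dominated splitting is genuine, with simple unstable spectrum, so the ordering and the bundles $E^{wu}_{f,1,k}$ are canonical.

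For the equivalence I would reduce absolute continuity to a statement about a single Jacobian cocycle. Write $\phi_k(x)=\log\jac\big(Df|_{E^{wu}_{f,1,k}}(x)\big)$; since $E^{wu}_{f,1,k}$ realizes the $k$ slowest unstable Oseledets directions, ergodicity of Lebesgue $m$ for the $C^2$ conservative Anosov endomorphism $f$ gives $\int \phi_k\,dm=\sum_{i=1}^{k}\lambda^u_i(f,m)$, while the linear counterpart is the constant $\sum_{i=1}^{k}\lambda^u_i(A)=\log|\det(A|_{E^u_{1,k}})|$. Because $f$ is $C^2$ and conservative, the full unstable foliation is absolutely continuous, so absolute continuity of $W^{wu}_{1,k}$ is equivalent to absolute continuity of its holonomies inside each unstable leaf; these are governed by the infinite product of ratios $\prod_{n\ge 0}\exp\big(\phi_k(f^n x)-\phi_k(f^n x')\big)$ taken along leafwise-asymptotic orbits.

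Finally I would close the two directions. If the Lyapunov sums agree, then $\phi_k$ and the linear constant share the same $m$-average, and transporting the comparison through $h$, whose leafwise action intertwines $f$ with the affine leaf map of constant Jacobian, produces a functional equation exhibiting $\phi_k$ as that constant plus an $f$-coboundary; the holonomy products then converge with bounded distortion, giving absolute continuity of $W^{wu}_{1,k}$. Conversely, if $W^{wu}_{1,k}$ is absolutely continuous, the leafwise disintegration of $m$ is equivalent to leaf-Lebesgue and is carried by $h$ to the affine conditionals of Lebesgue for $A$; the resulting change of variables again forces $\phi_k$ to be cohomologous to a constant, and integrating against $m$ identifies that constant with both $\sum_{i=1}^{k}\lambda^u_i(f,m)$ and $\sum_{i=1}^{k}\lambda^u_i(A)$. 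The main obstacle is precisely this necessity direction: ruling out accidental absolute continuity needs a rigidity input — a Liv\v{s}ic/Gottschalk--Hedlund argument showing that the holonomy Jacobian can remain finite only when $\phi_k$ is a genuine coboundary plus the linear constant, or equivalently an invariance-principle argument (Ledrappier, Avila--Viana) pinning the disintegration — together with ergodicity of $m$, which converts the cohomological constant into the stated equality of sums.
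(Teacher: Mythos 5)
Your proposal diverges from the paper's proof in both halves, and in both places there are genuine gaps. For the first assertion, you \emph{define} $W^{wu}_{1,k}$ as $h\big(W^u_{1,k}(A)\big)$. But $h$ is only a (bi-H\"older) homeomorphism, so this produces a topological foliation whose leaves have no a priori $C^1$ structure: ``tangent to $E^{wu}_{f,1,k}$'' is then an unproved claim, and ``quasi-isometric'' is not even meaningful, since quasi-isometry compares the \emph{intrinsic Riemannian leaf metric} with the ambient metric, and a bounded perturbation $\tilde h=\mathrm{Id}+b$ controls ambient distances only additively while saying nothing about intrinsic leaf length (indeed leaves of an $h$-image need not be rectifiable). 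The paper goes the other way around: it first proves quasi-isometry of $\widetilde W^u_f$ and $\widetilde W^s_f$ via Brin's angle criterion (Proposition \ref{B1}, the uniform transverse angle persisting under $C^1$ perturbation), deduces dynamical coherence of $\tilde f$ from \cite{B}, uses the uniqueness of $E^{wu}_f\oplus E^s_f$ from \cite{CM22} together with specialness to obtain $W^{wu}_f=W^{cs}_f\cap W^u_f$ (Lemma \ref{lem2}), and only \emph{afterwards} shows in Lemma \ref{lem3} that $\tilde h$ matches these leaves with the linear ones; the leaf matching is a consequence of the construction, not a definition.

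For the equivalence, your central device fails twice. First, the holonomy of the intermediate foliation inside an unstable leaf is not governed by the forward-orbit product $\prod_{n\ge 0}\exp\big(\phi_k(f^nx)-\phi_k(f^nx')\big)$: points on the same $wu$-leaf \emph{separate} under forward iteration, and for a non-invertible map backward orbits branch, so your product neither converges nor represents a holonomy Jacobian without passing to the natural extension. Second, both cohomology steps are unjustified: equality of the $m$-averages of $\phi_k$ and the linear constant does not make $\phi_k$ a constant plus coboundary (Liv\v{s}ic requires equality of Birkhoff sums at \emph{periodic points}, data you do not have), and ``transporting through $h$'' presupposes a change-of-variables for $h$ along leaves, i.e.\ leafwise absolute continuity of $h$ --- essentially the conclusion; you also never establish $h_{\ast}(m)=m$, which is a linchpin. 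The paper avoids cohomology entirely. For necessity it proves the two one-sided inequalities $\sum_{i\le d-1-k}\lambda^{su}_i(f,m)\le\sum_{i\le d-1-k}\lambda^{su}_i(A)$ and $\sum_{i\le k}\lambda^{wu}_i(f,m)\le\sum_{i\le k}\lambda^{wu}_i(A)$ by the quantitative volume-growth argument of Theorem \ref{TeoA} (Proposition \ref{Prop vol untable}, which is where quasi-isometry and the shadowing of $f$-leaves by $A$-leaves actually enter --- this is the machinery your sketch lacks), and then forces both to be equalities using the total-sum identity from conservativity via the Pesin formula and Lemma \ref{lem1}, together with the stable-exponent rigidity $\lambda^s_f=\lambda^s_A$ of \cite{AGGS22}. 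For sufficiency it shows $m$ is the unique measure of maximal entropy, hence $h_{\ast}(m)=m$; by the leaf matching of Lemma \ref{lem3} the partial entropy satisfies $h_m(f,W^{wu}_{1,k})=h_m(A,W^{wu}_{1,k}(A))=\sum_{i\le k}\lambda^u_i(A)=\sum_{i\le k}\lambda^u_i(f,m)$, and the Pesin-formula characterization of the SRB property along expanding foliations (\cite{QZ}, \cite{LQ}) yields Lebesgue disintegration, i.e.\ absolute continuity. In short, the ``rigidity input'' you correctly flag as missing is supplied in the paper not by Liv\v{s}ic or an invariance principle but by the volume-growth inequality plus \cite{AGGS22}, and the sufficiency direction runs through entropy of subordinate partitions rather than bounded distortion.
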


\section{Preliminaries}

Our results are addressed to the study of Anosov endomorphisms, see \cite{PRZ} and \cite{MP75} for detailed definitions and properties. Here, we need to explore a more fine structure of Anosov endomorphisms requiring understanding, in some sense, of the intermediate bundles of their hyperbolic splitting. We need in our case to see Anosov endomorphisms as partially hyperbolic endomorphisms. Consider $M$ a $C^{\infty}$ closed Riemannian manifold.

\begin{definition}[Partially Hyperbolic Endomoprhism \cite{CM22}]\label{def 1}
Let $f:M\rightarrow M$ be a $C^1$ local diffeomorphism. We say that $f$ is a \emph{partially hyperbolic endomorphism} if there is a Riemannian metric
$\langle\cdot,\cdot\rangle$ and
constants \mbox{$0<\nu<\gamma_1\leq\gamma_2<\mu$} with $\nu<1,$ $\mu>1$ and $C>1$ such
that for each orbit $(x_n)_{n\in{\Z}}$ of $f,$ this is $f(x_n)=x_{n+1},$ there is a decomposition
$$T_{x_n}M=E^s_f({x_n})\oplus E^c_f(x_n)\oplus E^u_f(x_n)$$ satisfying:

\begin{enumerate}

\item $Df_{x_i}(E^{\ast}_f(x_i))=E^{\ast}_f(x_{i+1}), \ast \in \{s,c,u\},$  for any $i \in \mathbb{Z},$
\item $||Df^n_{x_i}(v^s)||\leq C\nu^n||v^s||,$
 \item  $C^{-1}\gamma^n_1||v^c||\leq||Df^n_{x_i}(v^c)||\leq C\gamma^n_2||v^c||,$
 \item $C^{-1}\mu^n||v^u||\leq||Df^n_{x_i}(v^u)||,$

\end{enumerate}
for any $n \geq 0,$  $i \in \mathbb{Z},$  $ v^s\in E_{x_i}^s,  v^c\in E_{x_i}^c$ and  $v^u\in E_{x_i}^u.$
\end{definition}

This definition is also called absolute partially hyperbolic endomorphism. Note that, when $E^c$ is trivial, the above definition becomes the definition of Anosov endomorphisms.

%
%
%
%
%

There are two common ways to study endomorphism as invertible systems. One is understanding the lift in the universal cover, and the other is considering the inverse limit space. About the lift in the universal cove, we have:

\begin{proposition}[\cite{MP75, CM22}]\label{Prop-Mane}
	Let $\widetilde{M}$ be the universal cover of $M$, then $f$ is an Anosov (partially hyperbolic) endomorphism of $M$, if and only if, its lift $\tilde{f}: \widetilde{M} \rightarrow  \widetilde{M}$ is an Anosov (partially hyperbolic) diffeomorphism.
\end{proposition}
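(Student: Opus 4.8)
The plan is to prove both implications at once by passing through a cone-field reformulation of (absolute) partial hyperbolicity, the key point being that uniform hyperbolicity is a condition on the derivative cocycle alone, and the covering projection $\pi:\widetilde{M}\to M$ is a local isometry intertwining the cocycles of $f$ and $\tilde f$.

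First I would check that $\tilde f$ is genuinely a diffeomorphism, not merely a local one. Since $f$ is a $C^1$ local diffeomorphism of the compact connected manifold $M$, it is proper, hence a finite-sheeted covering map. Consequently $f\circ\pi:\widetilde{M}\to M$ is again a covering, and because $\widetilde{M}$ is simply connected the lift $\tilde f$ of $f\circ\pi$ through $\pi$ (which satisfies $\pi\circ\tilde f=f\circ\pi$) is a covering map of $\widetilde{M}$; a covering of a simply connected space is a homeomorphism, and a homeomorphism that is a local diffeomorphism is a diffeomorphism. I then endow $\widetilde{M}$ with the pulled-back metric, so that $\pi$ is a local isometry, every deck transformation $\gamma$ is an isometry, and $\tilde f$ is equivariant with respect to the deck action.

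Next I would reformulate Definition \ref{def 1} in terms of cones. For the diffeomorphism $\tilde f$ this is standard: being an absolute partially hyperbolic diffeomorphism is equivalent to the existence of a continuous, $D\tilde f$-invariant family of cones realizing the dominated flag $\tilde E^s\subset\tilde E^s\oplus\tilde E^c\subset T\widetilde{M}$, with uniform expansion on the unstable cone, uniform contraction on the stable cone, and the intermediate bounds $\nu<\gamma_1\le\gamma_2<\mu$. The same characterization holds for the endomorphism $f$ in the sense of \cite{MP75, CM22}: a continuous invariant cone field on $M$ forces, along any prescribed bi-infinite orbit $(x_n)$, a splitting whose expanding part is recovered as the nested intersection $\bigcap_{n\ge 0}Df^{\,n}_{x_{-n}}(C^u_{x_{-n}})$ and whose stable/center part is dictated by the forward cone contraction; conversely the splitting along all orbits produces, by compactness and continuity, such a cone field. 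This cocycle-level description is exactly what makes the statement tractable, since it never requires one to name the splitting before the orbit is fixed. The transfer is then immediate: because $D\pi$ restricts to a linear isometry on each tangent space and $Df\circ D\pi=D\pi\circ D\tilde f$ along orbits, a cone field on $M$ pulls back under $(D\pi)^{-1}$ to a $D\tilde f$-invariant cone field on $\widetilde{M}$ with identical constants, and since $D\pi\circ D\gamma=D\pi$ this correspondence is $\pi_1$-equivariant, hence a bijection between invariant cone fields on $M$ and deck-invariant ones on $\widetilde{M}$. Thus $f$ admits an invariant cone field if and only if $\tilde f$ does, and chaining the two characterizations yields the equivalence.

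The step I expect to be the genuine obstacle is the implication that $\tilde f$ being Anosov (partially hyperbolic) forces $f$ to be so, i.e. producing the splitting along \emph{every} bi-infinite $f$-orbit. One is tempted to simply lift the orbit, but this fails: the backward itinerary of a generic orbit encodes an $A$-adic address realized by no point of $\widetilde{M}\cong\R^d$ (equivalently, once $\deg f>1$ the natural extension of $(M,f)$ is not homeomorphic to $\widetilde{M}$), so a typical bi-infinite $f$-orbit does not lift to a $\tilde f$-orbit. The cone reformulation is precisely what sidesteps this: the invariant cone field descends to $M$ pointwise through $D\pi$, independently of any orbit, and the orbit-dependent expanding bundle is then manufactured downstairs by the cone-intersection construction along the chosen past. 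The other implication ($f$ endomorphism $\Rightarrow$ $\tilde f$ diffeomorphism) carries no such difficulty, because every point of $\widetilde{M}$ already carries a full bi-infinite $\tilde f$-orbit whose projection is a bona fide $f$-orbit, so the given endomorphism splitting can be pulled back pointwise with the same constants.
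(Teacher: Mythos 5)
The paper contains no proof of Proposition \ref{Prop-Mane}; it is imported from \cite{MP75, CM22}, where the argument is, like yours, a transfer of the derivative cocycle through the local isometry $\pi$. Your preliminaries are correct: $f$ is a finite covering, its lift is a covering of the simply connected $\widetilde{M}$, hence a diffeomorphism; the easy direction ($f$ Anosov/partially hyperbolic endomorphism $\Rightarrow$ $\tilde f$ Anosov/partially hyperbolic diffeomorphism) works exactly as you say, by pulling back the splitting of the projected orbit of each point of $\widetilde{M}$ with unchanged constants; and you correctly diagnose why the converse cannot be done by lifting orbits — once $\deg f>1$, a typical bi-infinite $f$-orbit is the projection of no $\tilde f$-orbit — so that some orbit-free mechanism such as cone fields is genuinely needed.

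There is, however, one real gap, and it sits at the pivot of your argument. Your transfer principle is a bijection between invariant cone fields on $M$ and \emph{deck-invariant} invariant cone fields on $\widetilde{M}$, but the conclusion you draw from it, ``$f$ admits an invariant cone field if and only if $\tilde f$ does,'' silently drops the word deck-invariant, and that implication is exactly what needs proof: the natural unstable cone field of the partially hyperbolic diffeomorphism $\tilde f$, a cone of small aperture around $E^u_{\tilde f}$ (or $E^{cu}_{\tilde f}$), is in general \emph{not} deck-invariant. The reason is that these bundles are characterized by backward iteration, while the deck group is only forward-equivariant: $\tilde f\circ\gamma=f_{*}(\gamma)\circ\tilde f$, and $f_{*}$ is injective but not surjective on the deck group when $\deg f>1$, so $\tilde f^{-n}\circ\gamma\circ\tilde f^{\,n}$ leaves the deck group and is not an isometry of the pulled-back metric. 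Indeed, if $E^u_{\tilde f}$ were deck-equivariant, then $D\pi\bigl(E^u_{\tilde f}(\gamma\tilde x)\bigr)=D\pi\bigl(D\gamma\, E^u_{\tilde f}(\tilde x)\bigr)=D\pi\bigl(E^u_{\tilde f}(\tilde x)\bigr)$ would define a single unstable direction at each point of $M$, i.e.\ every Anosov endomorphism would be special, contradicting the examples of \cite{PRZ}. The repair is short but must be made explicit: the forward-characterized bundles $E^s_{\tilde f}$ and $E^{cs}_{\tilde f}$ \emph{are} deck-equivariant, since for $v\in E^s_{\tilde f}(\tilde x)$ one has $\|D\tilde f^{\,n}(D\gamma v)\|=\|Df_{*}^{n}(\gamma)(D\tilde f^{\,n}v)\|=\|D\tilde f^{\,n}v\|$ because $f_{*}^{n}(\gamma)$ is a genuine deck transformation, hence an isometry; so these bundles descend to $M$, and one should take as unstable (resp.\ center-unstable) cone the \emph{complement} of a cone of fixed aperture around $E^{cs}_{\tilde f}$ (resp.\ around $E^s_{\tilde f}$). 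These complementary cones are deck-invariant by construction, strictly $D\tilde f$-invariant, and uniformly expanded precisely because the constants $\nu<\gamma_1\leq\gamma_2<\mu$ in Definition \ref{def 1} are absolute. With that substitution your descent and the cone-intersection construction of the past-dependent bundles downstairs go through, and the proof is complete.
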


Now, let us describe the $f$  in the inverse limit space or also known as the natural extension of $f$. Let

$$
M^f=\left\{ (x_n)_{n\in {\Z}} \in  \prod_{i\in{\Z}} M_i; \,\,\, M_i=M \,\, {\rm and} \,\, f(x_n)=x_{n+1} \right\}.
$$

Since $M$ is compact and $f$ is continuous, it is easy to check that $M^f$ is a compact metric space with the metric.

$$
\hat{d}(\tilde{x},\tilde{y})=\sum_{i\in{\Z}}\frac{d(x_i,y_i)}{2^{|i|}}.
$$

We define $\hat{f}:M^f\rightarrow M^f$ by  $\hat{f}((x_n)_{n\in {\Z}})=(x_{k})_{k\in {\Z}},$ where $k=n+1,$ like shift map. Consider $p:M^f\rightarrow M,$ the projection in the zero coordinate, this is, if $\tilde{x}=(x_n)_{n\in {\Z}}$ then
$p(\tilde{x})=x_0$.  It is easy to verify that $p$ is continuous.

Partially hyperbolic endomorphisms can present a complicated structure of the center and unstable bundles. Given a point $x\in M$ can be defined infinitely many $E^c, E^u$ bundles at $x,$ see for instance Theorem A of \cite{CM22}. Nicely $E^{cs}$ is uniquely defined for each point. For more topological properties of partially hyperbolic and Anosov endomorphisms we refer to \cite{CM22} and \cite{MT16}.

\subsection{Lyapunov Exponents}

Lyapunov exponents are important constants and measure the asymptotic behavior of dynamics in the tangent space level. Let $f: M \rightarrow M$ be a measure preserving diffeomorphism. Then by  Oseledec's Theorem, for almost every $x \in M $ and any $v \in T_x M $ the following limit exists:

$$\lambda(f,x,v) := \lim_{n \rightarrow +\infty} \frac{1}{n} \log ||Df^n(x) \cdot v ||$$
and is called of the Lyapunov exponents of $f$ in $x$ in the direction $v$.

In the setting of partially hyperbolic endomorphism is important to understand the notion of intermediate Lyapunov exponent. Considering the decomposition $E^s(x_0) \oplus E^c(x_0) \oplus E^u(x_0),$ for some $(x_n)_{n \in \mathbb{Z}} $ a complete orbit of limit inverse space, it is necessary comprehension of center Lyapunov exponent and that it depends just on the $x_0.$

Consider $f$ and $A$ as in Theorem \ref{TeoC}. Denote by $\tilde{f}, \tilde{A}: \mathbb{R}^d \rightarrow \mathbb{R}^d$ the lifts of $f$ and $A$ respectively. Since $f$ is enough $C^1-$close to $A,$ then $\tilde{f}$ and $\tilde{A}$ are enough $C^1-$close. 
By Proposition \ref{Prop-Mane} $\tilde{f}$ is a partially hyperbolic diffeomorphism.  Moreover by \cite{pesin2004lectures}, we get $E^u_{\tilde{f}} = E^u_{\tilde{f},1} \oplus \ldots \oplus E^u_{\tilde{f},d-1} .$ Denote by $E^{wu}_{\tilde{f}, 1, k}  = E^u_{\tilde{f},1} \oplus \ldots \oplus E^u_{\tilde{f},k} $ and $E^{su}_{\tilde{f}, k+1, d-1}  = E^u_{\tilde{f},k+1} \oplus \ldots \oplus E^u_{\tilde{f},d-1} .$

Considering $F^c_{\tilde{f}} = E^{wu}_{\tilde{f}, 1, k} $ and $F^u_{\tilde{f}} = E^{su}_{\tilde{f}, k+1, d-1},$ bundles of the partially hyperbolic splitting of $\tilde{f}.$ Since $f$ is special and $F^c = F^{cs} \cap F^{cu},$ by Lemma 2.5 and Proposition 2.10 of \cite{CM22}, we can see that, for each $x \in \mathbb{T}^d,$ it is uniquely defined the center bundle $F^c_{f}(x) = E^{wu}_{f, 1, k}(x), $ for every $k \geq 1$ and $k+1 \leq d-1.$

If we consider for the same $x \in \mathbb{T}^d,$ two intermediate directions $E^u_{f,i}(x)$ and $\tilde{E}^u_{f,i}(x).$ Since $E^u_{f,1,i}(x)$ is uniquely well defined for $x,$ we can note that \mbox{every} \mbox{$v \in  E^u_{f,i}(x) \setminus \{0\}$} has a nontrivial component in $\tilde{E}^u_{f,i}(x)$ and vice-versa. By  \mbox{Oseledec's} Theorem, if $v \in  E^u_{f,i}(x) \setminus \{0\} $ and $\tilde{v} \in  \tilde{E}^u_{f,i}(x) \setminus \{0\}, $ then we obtain \mbox{$\lambda(f, x , v) = \lambda(f, x , \tilde{v}) $} and it will be denoted by $\lambda^u_{i}(f,x).$

\subsection{Absolute Continuity}

Roughly speaking a foliation $ W$ of $M$ is absolutely continuous if satisfies: Given a set $Z \subset M,$ such that $Z$ intersects the leaf $ W(x)$ on a zero measure  set of the leaf, with $x$ along a full Lebesgue set of $M,$ then $Z$ is a zero measure set of $M.$
More precisely we write:

\begin{definition} We say that a foliation $ W$ of $M$ is absolutely continuous if given any $ W-$foliated box $B$ and a Lebesgue measurable set $Z,$ such that $Leb_{ W(x)\cap B} ( W(x)\cap Z) = 0,$ for $m_B-$ almost everywhere $x \in B,$ then $m_B(Z) = 0.$ Here $m_B $ denotes the Lebesgue measure on $B$ and $Leb_{ W(x)\cap B} $ is the Lebesgue measure of the submanifold $ W(x)$ restricted to $B.$
\end{definition}

It means that if $P$ is such that $m_B(P) > 0,$ then there are a measurable subset $B' \subset B,$ such that $m_B(B') > 0$ and  $Leb_{ W(x)\cap B}( W(x) \cap Z) > 0$ for every $x \in B'.$

\subsection{Highlights of SRB measures for endomorphisms} The reader more acquainted with SRB theory can jump straight to the proofs of the theorems.

At this moment we need to work with the concept of SRB measures for endomorphisms. In fact, SRB measures play an important role in the ergodic theory of differentiable dynamical systems. For $C^{1+\alpha}-$systems these measures can be characterized as ones that realize the Pesin Formula or equivalently the measures for which the conditional measures are absolutely continuous w.r.t. Lebesgue restricted to local stable/unstable manifolds. We go to focus our attention on the endomorphism case. Before proceeding with the proof let us give important and useful definitions and results concerning SRB measures for endomorphisms.

First, let us recall an important result.

\begin{theorem}[\cite{QXZ}] Let $(M,d)$ be a compact metric space and $f: M \rightarrow M$ a continuous map. If $\mu$ is an $f-$invariant Borelian probability measure, the exist a unique $\tilde{f}-$invariant Borelian probability measure $\tilde{\mu}$ on $M^f,$ such that $\mu(B) = \hat{\mu}(p^{-1}(B)).$
\end{theorem}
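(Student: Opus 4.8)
The plan is to build $\hat{\mu}$ by hand on the dense subalgebra of cylinder functions and then invoke the Riesz representation theorem, exploiting the one structural feature of the inverse limit that drives everything: for a point $\tilde{x}=(x_n)_{n\in\Z}\in M^f$ the forward coordinates are \emph{determined} by any earlier one, since $x_{a+j}=f^{j}(x_a)$ for all $j\ge 0$. Consequently, any continuous $\varphi:M^f\to\R$ depending only on the coordinates in a window $\{a,a+1,\dots,b\}$ is in fact a continuous function of the single coordinate $x_a$; I write $\varphi(\tilde{x})=\Phi_a(x_a)$ with $\Phi_a\in C(M)$, i.e. $\varphi=\Phi_a\circ p\circ\hat{f}^{\,a}$. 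Since these functions separate points of $M^f$ and contain the constants, Stone--Weierstrass makes them dense in $C(M^f)$ (recall the metric $\hat{d}$ induces the product topology on the compact set $M^f$).

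First I would define the candidate functional on cylinder functions by
\[
L(\varphi):=\int_M \Phi_a\,d\mu,
\]
and verify it is well defined, i.e. independent of the chosen left index. Enlarging the window to the left, replacing $a$ by $a'<a$, forces $\Phi_{a'}=\Phi_a\circ f^{\,a-a'}$, whence $\int_M\Phi_{a'}\,d\mu=\int_M\Phi_a\circ f^{\,a-a'}\,d\mu=\int_M\Phi_a\,d\mu$ precisely by the $f$-invariance of $\mu$. This consistency is the crux of the argument and the single place where the hypothesis on $\mu$ is genuinely used. As $L$ is then positive, linear, satisfies $L(1)=1$ and $|L(\varphi)|\le\|\varphi\|_\infty$, it extends continuously to $C(M^f)$, and Riesz representation produces a regular Borel probability measure $\hat{\mu}$ on $M^f$ with $\int\varphi\,d\hat{\mu}=L(\varphi)$.

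Next I would read off the two asserted properties. For the projection identity, taking $\varphi=g\circ p$ with $g\in C(M)$ (window $\{0\}$, so $\Phi_0=g$) gives $\int g\circ p\,d\hat{\mu}=\int_M g\,d\mu$, hence $p_{\ast}\hat{\mu}=\mu$ and therefore $\hat{\mu}(p^{-1}(B))=\mu(B)$ for every Borel set $B$. For invariance, if $\varphi$ has left index $a$ then $\varphi\circ\hat{f}$ has left index $a+1$, and unwinding the shift $(\hat{f}\tilde{x})_m=x_{m+1}$ shows that the function of $x_{a+1}$ representing $\varphi\circ\hat{f}$ is exactly $\Phi_a$; thus $L(\varphi\circ\hat{f})=\int_M\Phi_a\,d\mu=L(\varphi)$, i.e. $\hat{f}_{\ast}\hat{\mu}=\hat{\mu}$.

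Finally, uniqueness follows from the same determinacy together with invariance. If $\nu$ is any $\hat{f}$-invariant probability with $p_{\ast}\nu=\mu$, then for a cylinder function $\varphi=\Phi_a\circ p\circ\hat{f}^{\,a}$ I compute
\[
\int\varphi\,d\nu=\int \Phi_a\circ p\,\,d(\hat{f}^{\,a}_{\ast}\nu)=\int \Phi_a\circ p\,\,d\nu=\int_M\Phi_a\,d\mu=L(\varphi),
\]
so $\nu$ and $\hat{\mu}$ agree on a dense subspace of $C(M^f)$ and hence coincide. The main obstacle is only the well-definedness in the first step: once consistency is secured by the $f$-invariance of $\mu$, existence, the projection identity, invariance, and uniqueness all fall out of the bookkeeping of shifting the distinguished left index.
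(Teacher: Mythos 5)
The paper itself offers no proof here: the statement is imported wholesale from \cite{QXZ} as background for the SRB machinery, so there is no internal argument to compare against. Measured against the standard construction of the natural extension, your proof is correct and is essentially that construction in dual form: where the usual argument builds $\hat{\mu}$ on cylinder \emph{sets} by Kolmogorov-type consistency for the inverse limit, you build the corresponding positive functional on cylinder \emph{functions} and invoke Stone--Weierstrass and Riesz, and you correctly isolate the consistency under enlarging the window to the left as the one place where $f$-invariance of $\mu$ is used; the shift computation for $\hat{f}$-invariance, the identity $p_{\ast}\hat{\mu}=\mu$, and the uniqueness argument (testing any invariant $\nu$ with $p_{\ast}\nu=\mu$ against cylinder functions) are all sound.

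One point needs more care when $f$ is not surjective. The projection $p\circ\hat{f}^{\,a}$ maps $M^f$ onto $\Lambda=\bigcap_{n\ge 0}f^n(M)$, not onto $M$, so a cylinder function determines $\Phi_a$ only on $\Lambda$ (and producing a continuous $\Phi_a$ on all of $M$ needs a small compactness argument plus Tietze --- or, more simply, take the algebra $\{\Phi\circ p\circ\hat{f}^{\,a}:\Phi\in C(M),\,a\in\Z\}$ from the start, which sidesteps this). Consequently your identity $\Phi_{a'}=\Phi_a\circ f^{\,a-a'}$ is only guaranteed on $\Lambda$, and the well-definedness of $L$, its positivity, and the bound $|L(\varphi)|\le\|\varphi\|_\infty$ all additionally require $\mu(\Lambda)=1$. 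This is true, again by invariance: each $f^n(M)$ is compact hence Borel, $f^{-n}(f^n(M))=M$, so $\mu(f^n(M))=\mu(M)=1$ for every $n$, and $\mu(\Lambda)=1$ by continuity from above. With that one line added the argument is complete as stated for arbitrary continuous $f$; in the setting the paper actually uses it (Anosov endomorphisms, i.e.\ surjective local diffeomorphisms of $\T^d$) the issue disappears entirely, since then $p\circ\hat{f}^{\,a}$ is onto and $\Phi_a$ is uniquely determined.
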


\begin{definition}A measurable partition $\eta$ of $M^f$ is said to be subordinate to
$W^u-$manifolds of a system $(f, \mu)$ if for $\hat{\mu}$-a.e. $\tilde{x} \in M^f,$ the atom $\eta(\tilde{x}),$ containing $\tilde{x},$ has the following properties:
\begin{enumerate}
\item $p|_{\eta(\tilde{x})}: \eta(\tilde{x})  \rightarrow p(\eta(\tilde{x}))$ is bijective;
\item There exists a $k(\tilde{x})-$dimensional $C^1-$embedded submanifold $W(\tilde{x})$ of $M$ such that $W(\tilde{x}) \subset W^u(\tilde{x}),$
$$p(\eta(\tilde{x})) \subset W(\tilde{x})$$
and $p(\eta(\tilde{x}))$ contains an open neighborhood of $x_0 $ in $W(\tilde{x}).$ This neighborhood
being taken in the topology of $W(\tilde{x})$ as a submanifold of $M.$
\end{enumerate}
\end{definition}

We observe that by Proposition 3.2 of \cite{QZ}, every there exist such partition for Anosov endomorphism and they are can be taken increasing, that means $\eta$ refines $\tilde{f}(\eta) .$ Particularly $ p(\eta(\tilde{f}(\tilde{x}))) \subset p(\tilde{f}(\eta(\tilde{x}))) .$

\begin{definition} Let $f: M \rightarrow M$ be a $C^2-$endomorphism preserving an invariant Borelian probability $\nu.$
We say that $\nu$ has SRB property if for every measurable partition $\eta $ of $M^f$ subordinate
to $W^u-$manifolds of $f$ with respect to $\nu$, we have $p(\hat{\nu}_{\eta{(\tilde{x})}}) \ll m^u_{p(\eta(\tilde{x}))},$ for $\hat{\nu}-$a.e. $\tilde{x}$, where
$\{\hat{\nu}_{\eta{(\tilde{x})}} \}_{\tilde{x} \in M^f}$
is a canonical system of conditional measures of $\hat{\nu}$ associated with $\eta,$
and $m^u_{p(\eta(\tilde{x}))} $ is the Lebesgue measure on $W^u(\tilde{x})$ induced by its inherited Riemannian metric as a submanifold of $M.$
\end{definition}

The next theorems characterize SRB-property via Pesin's formula.
\begin{theorem}[Pesin Entropy Formula for Endomorphisms {\cite{PDL1}}]\label{pesin1} Let $f : M \rightarrow M$ be a $C^2$ endomorphism and $\mu$ an
$f-$invariant Borel probability measure on $M.$ If $\mu \ll m,$ then there holds
Pesin's formula
\begin{equation}
h_{\mu}(f) = \displaystyle\int_M \displaystyle\sum \lambda^i(x)^{+}m_i(x) d\mu.
\end{equation}
\end{theorem}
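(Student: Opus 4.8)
I would prove the stated identity by establishing the two opposite inequalities separately, working throughout on the natural extension $(M^f,\hat f,\hat\mu)$, where $\hat\mu$ is the unique $\hat f$-invariant lift of $\mu$ furnished by the theorem of \cite{QXZ}. Since $p\circ\hat f=f\circ p$ and $\hat f$ is invertible, entropy is preserved, $h_{\hat\mu}(\hat f)=h_\mu(f)$, and the Oseledets exponents of $\hat f$ along an orbit $\tilde x=(x_n)$ coincide with the numbers $\lambda^i$ read off $Df$ along that orbit; hence the right-hand integral may equally be computed on $M^f$ against $\hat\mu$, where the nonuniformly hyperbolic unstable manifolds $W^u(\tilde x)$ are genuinely attached to points $\tilde x\in M^f$.

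The upper bound $h_\mu(f)\le\int_M\sum_i\lambda^i(x)^+m_i(x)\,d\mu$ is Ruelle's inequality and holds for every $C^1$ endomorphism and every invariant $\mu$, with no absolute-continuity hypothesis. I would establish it by the standard covering estimate: cover $M$ by $\eps$-balls, bound the number of $(n,\eps)$-separated orbit segments issuing from one ball by the expansion of $Df^n$ on the Oseledets subspaces with positive exponent, divide by $n$, and let $\eps\to0$. The multiplicities $m_i(x)$ enter through $\det$ of $Df$ restricted to the expanding subspace, and Kingman's subadditive ergodic theorem identifies the resulting limit with $\int_M\sum_i\lambda^i(x)^+m_i(x)\,d\mu$.

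The substance is the reverse inequality, where the hypothesis $\mu\ll m$ is used, and I would follow the Ledrappier--Young/Ma\~n\'e strategy adapted to the inverse limit. First I would show that $\mu\ll m$ forces $\hat\mu$ to enjoy the SRB property of the preliminaries: for a measurable partition $\eta$ subordinate to $W^u$-manifolds, taken increasing so that $\eta$ refines $\hat f\eta$ (existence as in Proposition 3.2 of \cite{QZ}), the conditional measures $\hat\mu_{\eta(\tilde x)}$ project under $p$ to measures absolutely continuous with respect to leaf-Lebesgue $m^u_{p(\eta(\tilde x))}$. With absolutely continuous unstable conditionals in hand, I would then invoke Rokhlin's formula for the increasing partition $\eta$: since $\hat f^{-1}\eta\ge\eta$, one has $h_{\hat\mu}(\hat f)\ge H_{\hat\mu}(\hat f^{-1}\eta\mid\eta)$, and absolute continuity of the conditionals lets one evaluate this conditional entropy as the integral of the logarithm of the unstable Jacobian, $H_{\hat\mu}(\hat f^{-1}\eta\mid\eta)=\int_{M^f}\log\jac^u\,d\hat\mu$, where $\jac^u(\tilde x)=|\det(Df(x_0)|_{E^u})|$. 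By Birkhoff's theorem and Oseledets' decomposition of the expanding part, $\int\log\jac^u\,d\hat\mu=\int_M\sum_i\lambda^i(x)^+m_i(x)\,d\mu$, which yields the lower bound and, combined with Ruelle's inequality, the asserted equality.

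The main obstacle is the first stage of the lower bound: transporting the global hypothesis $\mu\ll m$ on $M$ into leafwise absolute continuity of the unstable conditionals on $M^f$. Two features make this more delicate than in the invertible case. Unstable leaves are attached to backward orbits rather than to points of $M$, so the density $d\mu/dm$ must be disintegrated through the projection $p$ while the overlapping contributions of distinct preimages are kept under control; and, absent uniform hyperbolicity, the local unstable manifolds exist only $\hat\mu$-almost everywhere with sizes varying measurably over Pesin blocks. I expect to control both by appealing to the absolute continuity of the (nonuniformly hyperbolic) stable holonomies, whose Jacobians are bounded on each Pesin block thanks to the $C^2$ regularity; combined with $\mu\ll m$ and Fubini along a foliated box, this produces the required absolute continuity of $p_*\hat\mu_{\eta(\tilde x)}$ with respect to $m^u$. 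In the uniformly hyperbolic setting relevant to the rest of the paper the argument simplifies, since one may pass to the lift $\tilde f$ on $\R^d$, which is a genuine partially hyperbolic diffeomorphism by Proposition \ref{Prop-Mane}, and invoke the classical absolute continuity of its invariant foliations directly.
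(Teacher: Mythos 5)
You should know at the outset that the paper contains no proof of this statement: it is quoted verbatim from Liu \cite{PDL1}, so the comparison is necessarily with the literature proof. Your architecture --- Ruelle's inequality for the upper bound; natural extension, increasing partition $\eta$ subordinate to $W^u$-manifolds, and the Rokhlin-type estimate $h_{\hat\mu}(\hat f)\ge H_{\hat\mu}(\hat f^{-1}\eta\mid\eta)=\int_{M^f}\log\jac^u\,d\hat\mu$ for the lower bound --- is coherent and is exactly the Ledrappier--Strelcyn/Qian--Zhu machinery. But the pivotal step is the one you defer: that $\mu\ll m$ forces the SRB property of $\hat\mu$. You offer only an expectation (``I expect to control both by appealing to the absolute continuity of the stable holonomies\ldots''), and this is precisely where noninvertibility bites: $M^f$ is not a manifold, the fiber $p^{-1}(x)$ is an uncountable set of prehistories, the local unstable leaves $p(\eta(\tilde x))$ through a fixed $x_0$ form an uncountable overlapping family indexed by backward orbits, and a Fubini argument in a foliated box downstairs does not by itself disintegrate $\hat\mu$ upstairs --- one must control the conditional measures of $\hat\mu$ on the fibers $p^{-1}(x)$ simultaneously with the leafwise geometry, and no estimate you state does this. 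Your closing remark that the uniformly hyperbolic case ``simplifies'' also does not rescue the theorem as stated, which concerns arbitrary $C^2$ endomorphisms with no hyperbolicity hypothesis. As written, the lower bound is not proved.

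It is worth knowing that Liu's actual proof, following Ma\~n\'e's proof of Pesin's formula for diffeomorphisms, deliberately avoids this step: the inequality $h_\mu(f)\ge\int_M\sum_i\lambda^i(x)^{+}m_i(x)\,d\mu$ is obtained directly on $M$ from $\mu\ll m$ by a volume-distortion and Borel-density argument (comparing the $\mu$-measure and $m$-measure of dynamically defined sets, with $C^2$ regularity entering through bounded distortion), without ever constructing unstable manifolds or conditional measures. The implication you want --- absolute continuity, or equivalently the entropy formula, implies the SRB property --- is a separate theorem of Qian--Zhu \cite{QZ}, namely Theorem \ref{pesin2} of this paper, whose proof uses the entropy formula as input. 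So your proposed logic inverts the order of the literature: to complete your route you would have to reprove the hard half of the Qian--Zhu characterization for natural extensions independently of the entropy formula, a substantially harder task than the Ma\~n\'e--Liu direct estimate. Either supply that argument in full on Pesin blocks of $M^f$, or restructure the lower bound along Ma\~n\'e--Liu lines; the upper bound via Ruelle's inequality and the entropy identity $h_{\hat\mu}(\hat f)=h_\mu(f)$ are fine as you have them.
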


\begin{theorem}[\cite{QZ}] \label{pesin2} Let $f$ be a $C^2$ endomorphism on $M$ with an invariant Borel probability
measure $\mu$ such that $\log(|Jf(x)|) \in L^1(M,\mu).$ Then the entropy
formula
\begin{equation}\label{PesinU}
h_{\mu}(f) = \displaystyle\int_M \displaystyle\sum \lambda^i(x)^{+}m_i(x) d\mu
\end{equation}
holds if and only if $\mu$ has SRB property.
\end{theorem}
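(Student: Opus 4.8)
The plan is to separate the statement into its two halves: the construction of the quasi-isometric intermediate foliation $W^{wu}_{1,k}$ for special $f\in\mathcal U$, and the equivalence, for conservative $C^r$ $f$, between absolute continuity of $W^{wu}_{1,k}$ and the partial Lyapunov equality $\sum_{i=1}^{k}\lambda^u_i(f,m)=\sum_{i=1}^{k}\lambda^u_i(A)$. The second half is the substantive part, and I would route it through SRB theory for the intermediate expanding foliation, using the conjugacy with the linear model to bring in $\sum_{i=1}^{k}\lambda^u_i(A)$.

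First I would pass to the universal cover. By Proposition \ref{Prop-Mane} the lift $\tilde f$ of $f$ is a partially hyperbolic (indeed Anosov) diffeomorphism of $\mathbb{R}^d$, $C^1$-close to $\tilde A$. For the linear model the unstable bundle is a direct, dominated sum of one-dimensional bundles, so the dominated flag $E^{wu}_{\tilde f,1,k}=E^u_{\tilde f,1}\oplus\cdots\oplus E^u_{\tilde f,k}$ persists under $C^1$ perturbation; regarding it as the center bundle of a partially hyperbolic splitting of $\tilde f$, a standard graph-transform argument integrates it to a foliation $W^{wu}_{\tilde f,1,k}$. Because $A$ is irreducible over $\mathbb{Q}$ with $\dim E^s_A=1$, the linear subfoliations are quasi-isometric in $\mathbb{R}^d$, and quasi-isometry is a $C^0$-open property stable under $C^1$-small perturbation, so $W^{wu}_{\tilde f,1,k}$ is quasi-isometric. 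Finally, since $f$ is special the center bundle $E^{wu}_{f,1,k}$ is uniquely defined on $\mathbb{T}^d$ (Lemma 2.5 and Proposition 2.10 of \cite{CM22}, as recalled in the Preliminaries), so $W^{wu}_{\tilde f,1,k}$ descends to the desired foliation $W^{wu}_{1,k}$ of $\mathbb{T}^d$.

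For the equivalence, let $m$ be Lebesgue. Since $m\ll m$, Theorem \ref{pesin1} gives Pesin's formula for $(f,m)$, and Theorem \ref{pesin2} then shows that $m$ has the SRB property: the conditional measures of $\hat m$ along full unstable manifolds in $M^f$ are absolutely continuous. I would first note that, with the notion of absolute continuity used here, $W^{wu}_{1,k}$ is absolutely continuous if and only if the conditional measures of $m$ along $W^{wu}_{1,k}$ are absolutely continuous with respect to the leafwise Lebesgue measure. The central step is then a partial Pesin/Ledrappier formula for the intermediate expanding foliation: working in $M^f$ with an increasing measurable partition subordinate to $W^{wu}_{1,k}$ (as in Proposition 3.2 of \cite{QZ}), the partial Ruelle inequality $h_m(f,W^{wu}_{1,k})\le \sum_{i=1}^{k}\lambda^u_i(f,m)$ holds with equality if and only if these conditionals are absolutely continuous. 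Thus $W^{wu}_{1,k}$ is absolutely continuous iff $h_m(f,W^{wu}_{1,k})=\sum_{i=1}^{k}\lambda^u_i(f,m)$. To close the argument I would bring in the linearization through the conjugacy $h\circ A=f\circ h$, which maps the affine foliation $W^{wu}_{A,1,k}$ leafwise onto $W^{wu}_{1,k}$: because $W^{wu}_{1,k}$ is quasi-isometric and $h$ displaces leaves by a bounded amount in the leafwise metric, the exponential volume growth of $k$-disks in the leaves of $W^{wu}_{1,k}$ is a conjugacy invariant equal to the exactly computable linear value $\sum_{i=1}^{k}\lambda^u_i(A)$. Comparing this volume growth with the partial entropy of the SRB measure should convert the equality $h_m(f,W^{wu}_{1,k})=\sum_{i=1}^{k}\lambda^u_i(f,m)$ into the stated equality $\sum_{i=1}^{k}\lambda^u_i(f,m)=\sum_{i=1}^{k}\lambda^u_i(A)$.

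The \textbf{main obstacle} is exactly this last comparison, for two reasons. First, the partial Ruelle inequality and the subordinate-partition machinery are stated in Theorems \ref{pesin1}--\ref{pesin2} for the full unstable bundle; here they must be developed inside the inverse limit $M^f$ for the \emph{non-extremal}, merely dominated expanding foliation $W^{wu}_{1,k}$, and the equality case (the partial SRB property at level $k$) must be isolated from the full SRB property at level $d-1$, since the latter does not by itself force absolute continuity of the intermediate subfoliation. Second, transporting the linear sum $\sum_{i=1}^{k}\lambda^u_i(A)$ across the conjugacy is delicate because $h$ is only Hölder; this is precisely where quasi-isometry of $W^{wu}_{1,k}$ is indispensable, as it makes leafwise volume growth a conjugacy invariant and identifies it with the value $\sum_{i=1}^{k}\lambda^u_i(A)$ computed for the affine model. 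Once the intermediate partial Pesin formula and this volume-growth identification are established, both implications of the equivalence in Theorem \ref{TeoC} follow from the entropy comparison above.
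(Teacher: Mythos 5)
Your proposal does not address the statement it is supposed to prove. The statement labelled \ref{pesin2} is the Qian--Zhu theorem characterizing the SRB property of an invariant measure $\mu$ of a $C^2$ endomorphism by the validity of Pesin's entropy formula; in the paper it is a quoted background result from \cite{QZ}, stated without proof and used as a tool. What you have written is instead a sketch of the proof of Theorem \ref{TeoC} (construction of the quasi-isometric intermediate foliation $W^{wu}_{1,k}$ and the equivalence of its absolute continuity with the partial exponent equality), and it explicitly \emph{invokes} Theorem \ref{pesin2} as an ingredient (``Theorem \ref{pesin2} then shows that $m$ has the SRB property''). Read as a proof of \ref{pesin2}, your argument is therefore circular; read as written, it answers a different question from the one posed. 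Nothing in your text engages with the actual content of \ref{pesin2}, namely why the entropy equality forces absolute continuity of unstable conditional measures and conversely.

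For the record, a genuine proof of \ref{pesin2} is a substantial piece of smooth ergodic theory for endomorphisms that none of your steps touch: one works on the inverse limit space $M^f$ with the natural extension $\hat f$ and the lifted measure $\hat\mu$, builds increasing measurable partitions subordinate to unstable manifolds (Proposition 3.2 of \cite{QZ}), and proves two implications. The ``if'' direction is a Ledrappier--Strelcyn-type computation: absolute continuity of the conditionals $p(\hat\mu_{\eta(\tilde x)})$ with respect to leafwise Lebesgue measure, combined with bounded distortion of the unstable Jacobian along backward orbits in $M^f$, yields $h_{\mu}(f)=\int_M\sum\lambda^i(x)^{+}m_i(x)\,d\mu$. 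The ``only if'' direction is the harder Ledrappier-type argument: assuming the entropy formula, one compares the conditional measures with the candidate measures whose densities are infinite products of normalized unstable Jacobians, and a strict Jensen-type inequality, which is an equality only in the absolutely continuous case, forces the conditionals to be equivalent to leafwise Lebesgue. Since the paper itself cites \cite{QZ} rather than reproving this, the appropriate move here is the same citation; if you intend your text as a proof of Theorem \ref{TeoC}, it should be presented as such (it is in fact broadly aligned with the paper's own argument for that theorem, via subordinate partitions and partial entropies along $W^{wu}_{1,k}$ as in \cite{LQ}), but it cannot stand as a proof of \ref{pesin2}.
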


Similar statements can be obtained analogously for invariant expanding foliations.

\subsection{Quasi-isometry and linearization}

An important tool that we need here is the quasi-isometry of foliations.

\begin{definition}
A foliation $W$ of a closed manifold $M$ is called quasi-isometric if there is a
constant $Q > 0$, such that in the universal cover $\widetilde{M}$ we have:
$$
d_{\widetilde{W}}(x,y)\leq Q d_{\widetilde{M}}(x,y) +Q
$$
for every $x, y$ points in the same lifted leaf $\widetilde{W}$,
where $\widetilde{W}$ denotes the lift of $W$ on $\widetilde{M}$.
\end{definition}

Here $d_{\widetilde{W}}$ denotes the Riemannian metric on $\widetilde{W}$ and $d_{\widetilde{M}}$ is a
Riemannian metric of the ambient $\widetilde{M}$.

From Brin, \cite{B} we have.

\begin{proposition}[\cite{B}]\label{B1} Let $W$ be a $k-$dimensional foliation of $\mathbb{R}^d.$ Suppose that there is an $(d - k)$ dimensional plane $P$ such that $T_x W(x)\cap P = \{0\}$ and $\angle(T_x W(x), P) > \beta > 0$ for every $x \in \mathbb{R}^d.$ Then $W$ is quasi-isometric.
\end{proposition}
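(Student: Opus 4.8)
The plan is to convert the uniform angle hypothesis into a uniform lower bound for a linear projection restricted to the leaves, and then to realize each leaf as a global graph over a fixed $k$-plane. Set $E = P^{\perp}$, the orthogonal complement of $P$, which has dimension $k$, and let $\pi\colon \R^d \to E$ be the orthogonal projection. For a unit vector $v \in T_x W(x)$ one has $\cos\angle(v,P) = \|\pi_P(v)\|$, where $\pi_P$ is the orthogonal projection onto $P$; hence the hypothesis $\angle(T_x W(x),P) > \beta$ yields $\|\pi_P(v)\| < \cos\beta$ and therefore $\|\pi(v)\| = \sqrt{1 - \|\pi_P(v)\|^2} > \sin\beta$. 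By homogeneity, $\|\pi(v)\| \ge \sin\beta\,\|v\|$ for every $v \in T_x W(x)$ and every $x$. In particular $\pi$ restricted to each tangent space $T_x W(x)$ is injective, so by the inverse function theorem $\pi|_{L}\colon L \to E$ is a local diffeomorphism for every leaf $L$, with the uniform bound $\|(D(\pi|_L))^{-1}\| \le 1/\sin\beta$.

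Next I would upgrade this local diffeomorphism to a global one by a path-lifting argument; this is the technical heart of the proof. Given a leaf $L$, a basepoint $x_0 \in L$, and a $C^1$ path $c\colon[0,1] \to E$ with $c(0) = \pi(x_0)$, I lift $c$ to $L$ starting at $x_0$. A lift exists for small time because $\pi|_L$ is a local diffeomorphism, and on any interval $[0,T)$ of existence the uniform bound gives $\mathrm{length}(\tilde c|_{[0,T)}) \le (1/\sin\beta)\,\mathrm{length}(c)$, so the lift stays in a bounded region of $\R^d$ and is Cauchy as $t \to T^{-}$, converging to a point $p$. Choosing a foliation box centred at $p$, the finite-length tail of $\tilde c$ is connected and hence lies in a single plaque of $L$, which is relatively closed in the box; thus $p \in L$ and the lift extends through $T$. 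A standard continuation argument then produces a lift on all of $[0,1]$, and local injectivity of $\pi|_L$ makes the lift unique. Consequently $\pi|_L$ has the unique path-lifting property, so it is a covering map onto $E \cong \R^k$; since $\R^k$ is simply connected and $L$ is connected, $\pi|_L\colon L \to E$ is a diffeomorphism.

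With each leaf realized as a global graph over $E$, the quasi-isometry bound is immediate. Given $x,y$ in the same leaf $L$, I lift the straight segment $\sigma$ from $\pi(x)$ to $\pi(y)$ to the path $\tilde\sigma \subset L$ from $x$ to $y$ (its endpoint is exactly $y$ by injectivity of $\pi|_L$). The derivative bound gives $\mathrm{length}(\tilde\sigma) \le (1/\sin\beta)\,\mathrm{length}(\sigma) = (1/\sin\beta)\,\|\pi(x)-\pi(y)\| \le (1/\sin\beta)\,\|x-y\|$, since orthogonal projection is $1$-Lipschitz. Hence the intrinsic leaf distance satisfies $d_W(x,y) \le \mathrm{length}(\tilde\sigma) \le (1/\sin\beta)\,\|x-y\|$, and $W$ is quasi-isometric with $Q = 1/\sin\beta$ (the additive constant in the definition may be taken equal to $Q$, or even $0$).

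I expect the continuation and path-lifting step to be the main obstacle: the local inverse-function statement is routine, but promoting it to a global diffeomorphism requires ruling out that a lifted path escapes the leaf, or runs off to infinity, in finite parameter time. The uniform angle bound $\beta$ is precisely what forces lifted paths to have length proportional to the length of their projections, which both keeps them bounded in $\R^d$ and, after the foliation-box argument, guarantees that the limit point returns into $L$; once global bijectivity of $\pi|_L$ is in hand, the metric estimate reduces to a one-line computation.
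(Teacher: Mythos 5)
Your proposal is correct and follows essentially the same route as the argument the paper attributes to Brin and reproduces in Claim~1 of the proof of Proposition~\ref{Prop vol untable}: the uniform angle bound makes the projection onto $P^{\perp}$ a local diffeomorphism along leaves with derivative inverse bounded by $1/\sin\beta$, which is promoted to a global diffeomorphism of each leaf onto the $k$-plane via a covering-space argument (your path-lifting continuation versus the paper's open-closed image plus Proposition~\ref{Prop recob}, the same mechanism), and the quasi-isometry constant $Q = 1/\sin\beta$ then falls out by lifting straight segments. The only step you should state with care is the passage from ``local homeomorphism with unique path lifting'' to ``covering map,'' but your uniform length estimate supplies exactly the uniform sheet size needed, just as in the paper's Claim~1.
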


Another important tool we will need is linearization. Every endomorphism $f$ of the torus ${\T}^d$ induces an automorphism of the fundamental group 
and there exists a unique linear endomorphism $f_{\ast}$ which induces the same automorphism
on $\pi_1({\T}^d)$. The endomorphism $f_{\ast}$ is called the linearization of $f$. When
$f$ is partially hyperbolic its linearization is also partially hyperbolic.  Similary as in \cite{H},  below we have results that  comes from the fact that $f$ and its linearization are homotopic and by Proposition \ref{Prop-Mane},  in the universal cover $\R^d$, its are diffeomorfisms.

\begin{proposition}\label{Prop Hamm 1}
Let $f:{\T}^d\rightarrow {\T}^d$ be a partially hyperbolic endomorphism with linearization $A$,
then for each $n\in{\Z}$ and $C>1$ there is an $M>0$ such that for all $x,y\in {\R}^d$ and
$$
||x-y||>M\,\,\, \Rightarrow\,\,\,\frac{1}{C}<\frac{||\tilde{f}^n(x)-\tilde{f}^n(y)||}{||\tilde{A}^n(x)-\tilde{A}^n(y)||}<C.
$$.

\end{proposition}

For a subset $X\subset{\R}^d$ and $R>0$, led $B_R(X)$ denote the neighbourhood
$$
B_R(X)=\{y\in{\R}^d;\,\,||x-y||<R\,\,\mbox{for\,\,some}\,\,x\in X\}.
$$
\begin{proposition}[\cite{H}]\label{Prop Hamm 3}
Let $f:{\T}^d\rightarrow {\T}^d$ be a partially hyperbolic endomorphism dynamically coherent with linearization $A$,
then there is a constant $R_c$ such that for all $x\in{\R}^d$,
\begin{itemize}
\item[(1)] $\widetilde{W}^{cs}_f(x)\subset B_{R_c}(\widetilde{W}^{cs}_A(x))$,
\item[(2)] $\widetilde{W}^{cu}_f(x)\subset B_{R_c}(\widetilde{W}^{cu}_A(x))$,
\item[(3)] $\widetilde{W}^{c}_f(x)\subset B_{R_c}(\widetilde{W}^{c}_A(x))$.
\end{itemize}

\end{proposition}

\begin{corollary}[\cite{H}]\label{Cor Hamm 2}
If $||x-y||\rightarrow\infty$ where $y\in \widetilde{W}^{c}_f(x)$
then $\frac{x-y}{||x-y||}\rightarrow \widetilde{E}^{c}_A(x)$ uniformly.
More precisely, for $\varepsilon>0$ there exists $M>0$ such that if $x\in{\R}^d$, $y\in \widetilde{W}^c_f(x)$ and $||x-y||>M$, then
$$
||\pi_A^{c\perp}(x-y)||<\varepsilon||\pi^{c}_A(x-y)||.
$$
where $\pi_A^{c}$ is the orthogonal projection in the subspace $E^c_A$ and $\pi_A^{c\perp}$ is the projection in the orthogonal
subspace $E^{c\perp}_A.$
\end{corollary}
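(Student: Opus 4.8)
The plan is to reduce the statement to elementary Euclidean geometry by exploiting the fact that the center leaves of the \emph{linear} map $A$ are affine. Since $\tilde A$ is linear, the intermediate expanding bundle $E^c_A$ is a fixed linear subspace of $\R^d$ and its integral leaf through $x$ is the affine subspace $\widetilde{W}^c_A(x)=x+E^c_A$. The only dynamical input needed is the bounded-distance containment of Proposition \ref{Prop Hamm 3}(3), namely $\widetilde{W}^c_f(x)\subset B_{R_c}(\widetilde{W}^c_A(x))$; everything else is linear algebra.

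First I would fix $y\in\widetilde{W}^c_f(x)$ and bound its transverse displacement uniformly. By Proposition \ref{Prop Hamm 3}(3) we have $\mathrm{dist}(y,\widetilde{W}^c_A(x))<R_c$. Because $\widetilde{W}^c_A(x)=x+E^c_A$ is affine, the distance from $y$ to it is exactly the norm of the component of $y-x$ orthogonal to $E^c_A$; that is, $\mathrm{dist}(y,\widetilde{W}^c_A(x))=\|\pi_A^{c\perp}(x-y)\|$. Hence
$$
\|\pi_A^{c\perp}(x-y)\|<R_c
$$
for every $y\in\widetilde{W}^c_f(x)$, with $R_c$ independent of $x$ and $y$. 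This is the crucial uniform bound: the transverse part of $x-y$ stays bounded no matter how far apart $x$ and $y$ are.

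Next I would show that the tangential part necessarily grows. Writing $x-y=\pi_A^c(x-y)+\pi_A^{c\perp}(x-y)$ as an orthogonal decomposition and applying Pythagoras gives $\|\pi_A^c(x-y)\|^2=\|x-y\|^2-\|\pi_A^{c\perp}(x-y)\|^2\geq\|x-y\|^2-R_c^2$. Combining this with the transverse bound yields
$$
\frac{\|\pi_A^{c\perp}(x-y)\|}{\|\pi_A^c(x-y)\|}\leq\frac{R_c}{\sqrt{\|x-y\|^2-R_c^2}},
$$
and the right-hand side tends to $0$ as $\|x-y\|\to\infty$. Given $\eps>0$, solving the inequality $R_c/\sqrt{\|x-y\|^2-R_c^2}<\eps$ produces the explicit threshold $M=R_c\sqrt{1+\eps^{-2}}$, so that $\|x-y\|>M$ forces $\|\pi_A^{c\perp}(x-y)\|<\eps\,\|\pi_A^c(x-y)\|$. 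This is precisely the quantitative statement of the corollary, and since $M$ depends only on $R_c$ and $\eps$ (not on $x$), the convergence $\frac{x-y}{\|x-y\|}\to\widetilde E^c_A(x)$ is uniform: dividing through by $\|x-y\|$ shows the transverse component of the unit vector vanishes in the limit, so $\frac{x-y}{\|x-y\|}$ approaches the unit sphere of $E^c_A$.

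I do not expect a genuine obstacle here, since the entire dynamical content is packaged in Proposition \ref{Prop Hamm 3}. The one point deserving care is the justification that $\widetilde{W}^c_A(x)$ is affine and that the center foliation $\widetilde{W}^c_f$ is well defined (dynamical coherence of $\tilde f$), so that the quantities $\pi_A^c$, $\pi_A^{c\perp}$ and the distance to $\widetilde{W}^c_A(x)$ make sense; both hold in the present partially hyperbolic setting on the torus. Once those are in place the argument is a one-line consequence of the bounded-distance property together with Pythagoras.
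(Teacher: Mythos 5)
Your proof is correct and is exactly the intended derivation: the statement appears in the paper as a corollary of Proposition \ref{Prop Hamm 3}, cited from \cite{H} without a reproduced proof, and your argument --- the uniform transverse bound $\|\pi_A^{c\perp}(x-y)\|<R_c$ obtained from the bounded-distance containment $\widetilde{W}^c_f(x)\subset B_{R_c}(x+E^c_A)$, followed by Pythagoras to force the tangential component to dominate --- is precisely how it follows there. The explicit threshold $M=R_c\sqrt{1+\varepsilon^{-2}}$, independent of $x$, correctly yields the claimed uniformity, so nothing is missing.
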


\section{Proof of Theorem \ref{TeoA}}

\begin{proposition}\label{Prop vol untable}
Let  $f:{\T}^d\rightarrow {\T}^d$ be a linear Anosov endomorphism conjugated with its linearization $A$ and there is an $\alpha>0$ such that in the universal cover $\angle(T_x\widetilde{W}^u_f, (\widetilde{E}^u_A)^{\bot})>\alpha>0$ for any $x\in{\R}^d,$
then given  $\varepsilon>0$, there is  $M > 0$ such that
$$
\vol(\tilde{f}^n\pi^{-1}(U_r))\leq C_0(1+\varepsilon)^{nd_u}e^{n\sum\lambda_i^u(A)}\vol(U_r)
$$
 for any $n,$  where $\pi$ is the  orthogonal projection from  $\widetilde{W}^u_f(z)$ to $\widetilde{E}^u_A(z)$ (parallel to $(\widetilde{E}^u_A)^{\bot}$) and $U_r\subset\widetilde{E}^u_A$ is a ball centered in $z$ of radius $r$ where $r>M$.
\end{proposition}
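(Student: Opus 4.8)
The plan is to reduce the $d_u$-dimensional volume of the image leaf-piece, where $d_u=\dim E^u_A=d-1$, to the Lebesgue volume of its orthogonal projection onto $\widetilde{E}^u_A$, and then to locate that projection inside a controlled $\tilde A^n$-ellipsoid by exploiting that the conjugacy is a bounded distance from the identity in the universal cover. The genuine content is to produce the sharp rate $e^{n\sum\lambda^u_i(A)}$, i.e.\ the \emph{determinant} of $\tilde A^n|_{E^u_A}$, rather than an operator-norm bound.

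First I would record the geometric consequences of the angle hypothesis. Writing $L=\cot\alpha$, the condition $\angle(T_x\widetilde{W}^u_f,(\widetilde{E}^u_A)^{\bot})>\alpha$ forces every unstable leaf to be a global Lipschitz graph over $\widetilde{E}^u_A$ with slope bounded by $L$; consequently the orthogonal projection $P$ onto $\widetilde{E}^u_A$ (parallel to $(\widetilde{E}^u_A)^{\bot}$), restricted to a leaf, is a diffeomorphism with Jacobian in $[(\sin\alpha)^{d_u},1]$. Hence for any leaf-piece $S$ one has $\vol(S)\le (\sin\alpha)^{-d_u}\,\mathrm{Leb}_{d_u}(P(S))$. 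Crucially, $\widetilde{W}^u_f$ is $\tilde f$-invariant, so $\tilde f^n(\pi^{-1}(U_r))$ is again such a graph and the same estimate applies to it. This reduces the problem to bounding $\mathrm{Leb}_{d_u}\big(P(\tilde f^n\pi^{-1}(U_r))\big)$.

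Next comes the comparison with $\tilde A^n$. Since $f$ and $A$ share the linearization $A$, the lift satisfies $\tilde h-\mathrm{id}\in$ a $\Z^d$-periodic map, so $\|\tilde h-\mathrm{id}\|_\infty\le K<\infty$, and $\tilde f^n\circ\tilde h=\tilde h\circ\tilde A^n$. Given $w\in\pi^{-1}(U_r)$, I would write $w=\tilde h(z_0+s)$ with $z=\tilde h(z_0)$ and $s\in E^u_A$, using that $\tilde h$ maps the affine leaf $z_0+E^u_A=\widetilde{W}^u_{\tilde A}(z_0)$ onto $\widetilde{W}^u_{\tilde f}(z)$. A telescoping of $\tilde h$ then yields $\tilde f^nw-\tilde f^nz=\tilde A^n s+e$ with $\|e\|\le 2K$; projecting and using $\tilde A^n s\in E^u_A$ gives $P(\tilde f^nw-\tilde f^nz)\in \tilde A^n s+B_{2K}$, while the $n=0$ case forces $\|s\|\le r+2K$. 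Therefore $P(\tilde f^n\pi^{-1}(U_r))\subset \tilde A^n\big(B^{E^u_A}_{r+2K}\big)+B_{2K}$, a fixed ellipsoid fattened by a bounded amount. The volume count finishes it: the ellipsoid $\tilde A^n(B^{E^u_A}_{r+2K})$ has semiaxes $\sigma_i(r+2K)$ with $\prod_i\sigma_i=|\det \tilde A^n|_{E^u_A}|=e^{n\sum\lambda^u_i(A)}$ and $\sigma_i\ge 1$ (up to absorbing finitely many $n$ into the constant); the fattening by $B_{2K}$ and the passage $r\mapsto r+2K$ contribute only factors $(1+cK/r)^{d_u}$, which are $\le(1+\varepsilon)^{d_u}\le(1+\varepsilon)^{nd_u}$ once $r>M:=cK/\varepsilon$. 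Collecting the distortion constant $(\sin\alpha)^{-d_u}$ into $C_0$ gives the claim.

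The main obstacle is directional rather than metric. One must place the projected image inside the \emph{correct} ellipsoid $\tilde A^n(B_r)$, of volume $e^{n\sum\lambda^u_i(A)}\vol(U_r)$, and not merely inside a round ball of radius $\sim\|\tilde A^n|_{E^u_A}\|\,r$, which would overcount the exponential rate as $e^{nd_u\lambda^u_{\max}(A)}$. The norm comparison of Proposition \ref{Prop Hamm 1} alone does not capture this, and moreover its scale $M$ depends on $n$, whereas the statement demands a single $M$ valid for all $n$. The bounded-displacement identity for $\tilde h$ supplies exactly the vector-level control and the uniform scale; alternatively one could run the same estimate through the directional convergence of Corollary \ref{Cor Hamm 2} together with Brin's quasi-isometry (Proposition \ref{B1}), iterating Proposition \ref{Prop Hamm 1} one step at a time with $C=1+\varepsilon$, which is the route that most transparently produces the $(1+\varepsilon)^{nd_u}$ form of the error.
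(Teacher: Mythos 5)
Your proposal is correct, and its central comparison step takes a genuinely different route from the paper's. The first reduction is shared: the angle hypothesis makes the projection $\pi$ parallel to $(\widetilde{E}^u_A)^{\bot}$ a uniformly bi-Lipschitz diffeomorphism from each lifted unstable leaf onto $\widetilde{E}^u_A$, so leaf volume is controlled by the Lebesgue measure of the projection (this is the paper's Claim 1 together with Lemma \ref{lema Lp}; your ``global Lipschitz graph'' assertion is exactly the content of Claim 1, whose globality still requires the Brin-type open--closed and covering-map argument, so don't treat it as automatic). Where you diverge is in locating the projected image: the paper never uses the conjugacy, but instead proves directional control via Corollary \ref{Cor Hamm 2} (its Claim 2) and then iterates Proposition \ref{Prop Hamm 1} one step at a time with $C=1+\varepsilon$, checking that large distances remain larger than the same threshold $M$ under $\tilde f$, to obtain $\tilde{f}^n(\pi^{-1}(U_r))\subset\pi^{-1}\bigl((1+\varepsilon)^n\tilde{A}^n(U_r)\bigr)$ and hence the factor $(1+\varepsilon)^{nd_u}$ — precisely the ``alternative'' route you sketch at the end. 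Your main route instead exploits $\tilde h=\mathrm{id}+\varphi$ with $\varphi$ bounded and $\Z^d$-periodic: from $\tilde f^n\circ\tilde h=\tilde h\circ\tilde A^n$ you get the exact identity $\tilde f^n w-\tilde f^n z=\tilde A^n s+e$ with $\|e\|\le 2K$ \emph{uniformly in} $n$, so $P\bigl(\tilde f^n\pi^{-1}(U_r)\bigr)\subset\tilde A^n\bigl(B^{E^u_A}_{r+2K}\bigr)+B_{2K}$, and the inradius of the ellipsoid (at least $C^{-1}r$ after absorbing finitely many $n$ into $C_0$) makes the fattening cost only $(1+cK/r)^{d_u}\le(1+\varepsilon)^{d_u}$ for $r>M=cK/\varepsilon$. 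This buys you a sharper bound — a constant error factor in place of the paper's exponentially growing $(1+\varepsilon)^{nd_u}$ — and an $M$ that is manifestly independent of $n$, sidestepping the $n$-dependence of $M$ in Proposition \ref{Prop Hamm 1} that the paper must launder through its one-step induction; what the paper's route buys in exchange is independence from the conjugacy, since Claims 2 and 3 rest only on Hammerlindl's large-scale geometry of the linearization, whereas your argument uses $h$, which is legitimately a hypothesis here. Two details to keep honest in a full write-up: the identity $\prod_i\sigma_i=\lvert\det(\tilde A^n|_{E^u_A})\rvert=e^{n\sum_i\lambda^u_i(A)}$ with $\sigma_i\ge 1$ holds only for $n$ beyond a fixed $n_0$ (bound the remaining $n$ by $\|D\tilde f\|_{\infty}^{nd_u}$ and absorb into $C_0$), and the fact that $\tilde h$ carries $z_0+E^u_A$ onto $\widetilde{W}^u_{\tilde f}(\tilde h(z_0))$ should be justified by the backward-convergence characterization of unstable sets, using that $\tilde h^{\pm 1}$ are uniformly continuous.
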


This proposition is similar (Proposition 3.16, \cite{CT23}) just like its proof. We will need some auxiliary results.

\begin{lemma}\label{lem01}
	Let  $f$ be as in Proposition \ref{Prop vol untable}, then the foliation $\widetilde{W}^u_f$ is quasi-isometric.
\end{lemma}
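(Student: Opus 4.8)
The plan is to verify the hypothesis of Brin's criterion (Proposition \ref{B1}) for the lifted foliation $\widetilde{W}^u_f$. Recall that $\widetilde{W}^u_f$ is a $d_u$-dimensional foliation of $\R^d$ (where $d_u = d-1$ under the standing assumption $\dim E^s_A = 1$), and by hypothesis of Proposition \ref{Prop vol untable} we have the uniform transversality bound $\angle(T_x\widetilde{W}^u_f, (\widetilde{E}^u_A)^{\perp}) > \alpha > 0$ for every $x \in \R^d$.

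First I would set $P := (\widetilde{E}^u_A)^{\perp}$, which is precisely the $\widetilde{E}^s_A$-direction and hence a plane of complementary dimension $d - d_u = 1$ (more generally $d-k$ in the intermediate setting). The transversality angle being bounded below by $\alpha > 0$ is exactly the statement $\angle(T_x\widetilde{W}^u_f(x), P) > \alpha > 0$. From the uniform angle bound it follows that $T_x\widetilde{W}^u_f(x) \cap P = \{0\}$ at every point: if the intersection contained a nonzero vector, the angle between the tangent space and $P$ would be zero, contradicting $\alpha > 0$. Thus both conditions of Proposition \ref{B1} are met with $\beta = \alpha$.

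Applying Proposition \ref{B1} then yields directly that $\widetilde{W}^u_f$ is quasi-isometric, which is the conclusion. The only thing to double-check is that $P = (\widetilde{E}^u_A)^{\perp}$ is genuinely a fixed linear subspace of $\R^d$ independent of the basepoint: this holds because $A$ is linear, so its unstable bundle $\widetilde{E}^u_A$ is a constant subspace of $\R^d$ and therefore so is its orthogonal complement. This is what makes Brin's proposition applicable, since that result requires a single plane $P$ working uniformly over all of $\R^d$.

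I do not anticipate a serious obstacle here; the lemma is essentially a translation of the angle hypothesis into the language of Proposition \ref{B1}. The one point requiring a line of care is confirming that the angle condition stated against $(\widetilde{E}^u_A)^{\perp}$ matches Brin's hypothesis about the angle against the complementary plane $P$, and that the dimension count $\dim P = d - d_u$ is correct; both are immediate from $\dim \widetilde{E}^u_A = d_u$ and the linearity of $A$. Hence the proof is short: identify $P$, note transversality and trivial intersection, and invoke Proposition \ref{B1}.
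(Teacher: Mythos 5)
Your proof is correct and takes essentially the same route as the paper: set $P=(\widetilde{E}^u_A)^{\perp}$ and invoke Proposition \ref{B1} using the angle hypothesis of Proposition \ref{Prop vol untable}; the extra checks you record (trivial intersection $T_x\widetilde{W}^u_f\cap P=\{0\}$, and that $P$ is a fixed subspace of dimension $d-d_u$ by linearity of $A$) are exactly the routine details the paper leaves implicit. One small caveat: your aside that $(\widetilde{E}^u_A)^{\perp}$ ``is precisely the $\widetilde{E}^s_A$-direction'' need not hold, since the splitting $E^s_A\oplus E^u_A$ is generally not orthogonal, but this identification plays no role in your argument.
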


\begin{proof}
	We just need to apply the Proposition \ref{B1}, doing $P=(\widetilde{E}^u_A)^{\bot}.$ By the assumptions of Proposition \ref{Prop vol untable}, we have
	$\angle(T_x\widetilde{W}^u_f,P)>\alpha>0$, thus by the Proposition \ref{B1}, $\widetilde{W}^u_f$ is
	quasi-isometric.
\end{proof}

To prove the above proposition we need the following auxiliary results:

\begin{lemma}\label{lema Lp}
Let  $f:M\rightarrow M$ be a Lipschitzian map, then there is $K > 0$ such that $|\jac f(x)|\leq K,$ for any $x\in M$ such that  $f$ is differentiable at $x.$
\end{lemma}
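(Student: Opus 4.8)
The plan is to show that a global Lipschitz bound on $f$ forces a pointwise bound on the operator norm of $Df$ at every differentiability point, and then to pass from the operator norm to the Jacobian by an elementary determinant estimate.

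First I would fix a Lipschitz constant $L$ for $f$ with respect to the Riemannian distance, so that $d(f(x),f(y))\le L\,d(x,y)$ for all $x,y\in M$. At a point $x$ where $f$ is differentiable, I claim that the operator norm satisfies $\|Df(x)\|\le L$. To see this, take any unit vector $v\in T_xM$ together with a smooth curve $\gamma$ with $\gamma(0)=x$ and $\gamma'(0)=v$. Differentiability of $f$ at $x$ gives $\|Df(x)v\|=\lim_{t\to0}d(f(\gamma(t)),f(x))/t$, while the Lipschitz condition bounds the numerator by $L\,d(\gamma(t),x)$; since $d(\gamma(t),x)/t\to\|v\|=1$ as $t\to0$, this yields $\|Df(x)v\|\le L$. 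Hence every singular value of the linear map $Df(x)\colon T_xM\to T_{f(x)}M$ is at most $L$.

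Next I would convert this into a bound on the Jacobian. Writing $n=\dim M$, the quantity $|\jac f(x)|$ is, by definition, the product of the $n$ singular values of $Df(x)$ (equivalently $|\det Df(x)|$ computed in orthonormal bases of $T_xM$ and $T_{f(x)}M$). Since each singular value is bounded by $L$, we obtain $|\jac f(x)|\le L^{n}$, so that $K=L^{n}$ works uniformly over all differentiability points.

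The only point requiring a little care---rather than a genuine obstacle---is the passage from the intrinsic Riemannian setting to the linear-algebra estimate. Since $M$ is compact, one may alternatively cover it by finitely many charts on which the metric is uniformly comparable to the Euclidean one; on each chart $f$ is then Lipschitz for the Euclidean distance with a constant controlled by $L$ and the (bounded) metric distortion, which reduces everything to the classical fact that a Lipschitz map of a Euclidean domain has derivative of operator norm at most its Lipschitz constant wherever it is differentiable. Either route gives the same conclusion, and no hypothesis beyond the Lipschitz property of $f$ is used.
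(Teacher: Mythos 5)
Your argument is correct, and its decisive step is exactly the paper's: using the Lipschitz bound to force $\|D_xf\|\le L$ at every differentiability point by differentiating along curves. Where you genuinely part ways is in converting the operator-norm bound into a Jacobian bound. The paper argues softly: the set $\{A\in M_n(\R):\|A\|\le L\}$ is compact and $\det$ is continuous, hence $|\det D_xf|\le K$ for some unspecified $K$. You instead observe that every singular value of $D_xf$ is at most $\|D_xf\|\le L$, and since $|\jac f(x)|$ is the product of the $n$ singular values, you get the explicit constant $K=L^{n}$. Your route is sharper and more quantitative, at the cost of invoking (elementary) singular-value theory rather than a one-line compactness remark; the two are otherwise of equal difficulty. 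One further point in your favor: your closing paragraph about passing from the intrinsic Riemannian distance to the Euclidean linear-algebra estimate (via $d(\gamma(t),x)/t\to\|v\|$, or via finitely many charts with bounded metric distortion) addresses a gap the paper glosses over---its displayed difference quotient involving $f(x+tv)$ only makes literal sense in a chart, a subtlety your write-up handles explicitly.
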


\begin{proof}[Proof of Lemma]
Let $L$ be the the Lipschitz constant of $f$, give  $x\in M$ such that $f$ is differentiable and  $v\in T_xM$, then
$$
||D_xf(v)||=\displaystyle\lim_{t\rightarrow 0}\left\|\frac{f(x + tv)-f(x)}{t}\right\|\leq\lim_{t\rightarrow 0}\frac{L||x+tv-x||}{|t|}=L||v||
$$
It implies that  $||D_xf||=\displaystyle\sup_{v\in T_xM}\frac{||D_xf(v)||}{||v||}\leq L.$

Since $\R^{n^2}$  is isomorphic to $M_n(\R),$ the space of all $n\times n-$matrixes with real coefficients,
by equivalence between norms of $\R^{n^2}$ we have \mbox{$\{A \in M_n(\R)|\; ||A||\leq L\}$} is compact.
Since $\det: M_n(\R) \rightarrow \R $ is continuous, then there is $K \geq 0$ such that
$$
\jac f(x)=|\det D_xf| \leq K.
$$
It concludes the proof.
\end{proof}

\begin{proposition}\label{Prop recob}
Let $f:X\rightarrow Y$ be a covering map such that $X$ is path connected and $Y$ is simply connected, then $f$ is a homeomorphism.
\end{proposition}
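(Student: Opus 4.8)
The plan is to reduce everything to proving that $f$ is bijective. Indeed, a covering map is by definition a local homeomorphism, hence it is automatically continuous and open; so once we know that $f$ is a bijection, its inverse is continuous and $f$ is a homeomorphism. Thus the entire content of the proposition lies in checking surjectivity and injectivity, and of these two the latter is where the hypotheses on $X$ and $Y$ really come into play.

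Surjectivity is the routine half. Since $Y$ is simply connected it is in particular connected, and the image $f(X)$ is open because $f$ is an open map. To see that $f(X)$ is also closed, I would argue that its complement is open: if $y\notin f(X)$, pick an evenly covered neighborhood $U$ of $y$; each sheet over $U$ maps homeomorphically onto $U$, so any nonempty sheet would produce a preimage of $y$, contradicting $y\notin f(X)$. Hence there are no sheets, $f^{-1}(U)=\emptyset$, and $U$ is disjoint from $f(X)$. As $X\neq\emptyset$ and $Y$ is connected, this forces $f(X)=Y$.

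The core of the argument is injectivity, and this is where simple connectivity of $Y$ is used decisively. Suppose $x_1,x_2\in X$ satisfy $f(x_1)=f(x_2)=y_0$. Since $X$ is path connected, I choose a path $\alpha$ in $X$ from $x_1$ to $x_2$; then $f\circ\alpha$ is a loop in $Y$ based at $y_0$. Because $\pi_1(Y,y_0)$ is trivial, $f\circ\alpha$ is homotopic rel endpoints to the constant loop $c_{y_0}$. I then invoke the homotopy lifting property of the covering (the monodromy theorem): two paths in $Y$ that are homotopic rel endpoints lift, from a common initial point, to paths in $X$ that are again homotopic rel endpoints and in particular share the same terminal point. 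Lifting from $x_1$, the lift of $c_{y_0}$ is the constant path $c_{x_1}$ ending at $x_1$, whereas $\alpha$ is, by uniqueness of path lifting, the lift of $f\circ\alpha$ starting at $x_1$ and ending at $x_2$. Comparing terminal points gives $x_1=x_2$, so $f$ is injective.

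Assembling these, $f$ is a continuous open bijection and therefore a homeomorphism. The main obstacle is the injectivity step, specifically the careful combination of unique path lifting with the homotopy lifting property; everything else is formal. I would emphasize that this argument uses only path connectivity of $X$ together with simple connectivity of $Y$, and never appeals to local path connectivity, which is exactly what makes it applicable to the universal-cover setting of the paper.
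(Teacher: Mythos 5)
Your proof is correct, but there is nothing in the paper to compare it against: the paper states Proposition \ref{Prop recob} without any proof, treating it as a standard fact of covering-space theory (it is invoked only once, to get injectivity of the projection $\pi$ in Claim 1 of Proposition \ref{Prop vol untable}). Your argument is the classical textbook one: surjectivity from the image being clopen in the connected space $Y$, and injectivity by lifting a path $\alpha$ between two points of a fiber, observing that $f\circ\alpha$ is a null-homotopic loop, and applying the monodromy theorem together with uniqueness of path lifting to conclude the endpoints coincide. All steps are sound; the only pedantic caveat is that under many definitions a covering map is surjective by fiat, in which case your first half is vacuous, and you should also note that simple connectivity of $Y$ is taken to include path connectedness (you use connectedness of $Y$). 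One genuinely good point in your write-up: you deliberately avoid the general lifting criterion, which would require local path connectedness. An alternative route --- lifting $\mathrm{id}_Y$ through $f$ via the lifting criterion to produce a continuous section, hence an inverse --- needs $Y$ locally path connected; that hypothesis does hold in the paper's application (there $Y=\widetilde{E}^u_A$ is a plane in $\mathbb{R}^d$), but your argument via unique path lifting and homotopy lifting works for arbitrary $Y$, so it proves the proposition exactly as stated.
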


\begin{proof}[Proof of the Proposition \ref{Prop vol untable}]

To prove it we will need some auxiliary claims:

\textbf{Claim  1:} For  $z\in{\R}^d$, the orthogonal projection \mbox{$\pi: \widetilde{W}^u_f(z)\rightarrow \widetilde{E}^u_A$}
(parallel to  $(\widetilde{E}^u_A)^{\bot}$) is a uniform bi-Lipschitz diffeomorphism.
\begin{proof}[Proof of Claim 1]
The argument used here is similar to that used in \cite{B}, Proposition 4.  Since the plane $(\widetilde{E}_A^u)^{\bot}$ is transversal to
the foliation $\widetilde{W}^u_f$ then there is  $\beta>0$ such that
		\begin{equation}\label{eq10}
		||d\pi(y)(v)||\geq\beta||v||
		\end{equation}
for any $x\in \widetilde{W}^u_f(z)$ and $v\in T_x\widetilde{W}^u_f(z)$, it implies that $\jac \pi(x)\neq 0$,
by Inverse Function Theorem  for each $x\in \widetilde{W}^u_f(z)$ there is a ball
$B(\delta,x)\subset \widetilde{W}^u_f(z)$ such that $\pi|_{B(\delta,x)}$ is a diffeomorphism. From proof of Inverse
Function Theorem and Equation \ref{eq10}, $\delta$ can be taken independent of $x$. Again, by Equation \ref{eq10}
there is $\varepsilon>0$, independent of $x$ such that $B(\varepsilon,\pi(x))\subset \pi(B(\delta,x))$. For to prove which
 $\pi$ is  surjective, we will show that $\pi(\widetilde{W}^u_f(z))$ is open and closed set. As $\pi$ is a local homeomorphism, then
 $\pi(\widetilde{W}^u_f(z))$ is open. To verify that it is also closed let
$y_n\in \pi(\widetilde{W}^u_f(z))$ be a sequence converging to $y$, hence there is a $n_0$ enough large
such that $y\in B(\varepsilon,y_{n_0})$ and therefore $y\in \pi(\widetilde{W}^u_f(z))$, then $\pi$ is surjective.
Moreover $\pi$ is a covering map, in fact for any $y\in \widetilde{E}^u_A$ there is a neighborhood
$B(\varepsilon,y)$ with $\pi^{-1}(B(\varepsilon,y))=\bigcup U_i$, where $\pi:U_i\rightarrow B(\varepsilon,y)$ is
a diffeomorphism. The injectivity follows from Proposition \ref{Prop recob}.

The map $\pi$ is Lipschitz.  In fact $||\pi(x)-\pi(y)||\leq||x-y||\leq d_{\widetilde{W}_f^u}(x,y)$. Let us show that
$\pi^{-1}$ is  Lipschitz.
From the Equation \ref{eq10},  consider $L=\frac{1}{\beta}$ such that  $||d\pi^{-1}(y)(v)||\leq L||v||$ for any
$y\in W^u_f(z)$ e $v\in T_yW^u_f(z)$. Let  $[x,y]$ be the line segment in  $\widetilde{E}^u_A$ connecting $x$ to  $y,$ so the set
$\pi^{-1}([x,y])=\gamma$ is a smooth curve connecting the points $\pi^{-1}(x)$ and $\pi^{-1}(y)$ in $\widetilde{W}^u_f(z)$. So,
$$
d_{\widetilde{W}^u}(\pi^{-1}(x),\pi^{-1}(y))\leq {length}(\gamma)=\int_{[x,y]}|d\pi^{-1}(t)|dt\leq L||x-y||.
$$
\end{proof}


\textbf{Claim 2:}  Let  $x,y\in \widetilde{E}^u_A$ , then given    $n$ and  $\varepsilon>0$, there is $M>0$ such that, if $||x-y||>M,$ then
$$
(1-\varepsilon)\nu^n||\pi^{-1}(x)-\pi^{-1}(y)||\leq||\tilde{A}^n\pi^{-1}(x)-\tilde{A}^n\pi^{-1}(y)||
\leq(1+\varepsilon)\eta^n||\pi^{-1}(x)-\pi^{-1}(y)||.
$$

\begin{proof}[Proof of Claim 2]
	We prove the second inequality, the first one shows similarly.  By Corollary \ref{Cor Hamm 2} we have
	$$
	\frac{\pi^{-1}(x)-\pi^{-1}(y)}{||\pi^{-1}(x)-\pi^{-1}(y)||}=v^u+e_M
	$$
	where  $v^u$ is an  unit vector in $\widetilde{E}^u(A)$  and  $e_M$ is a vector correction with
	converges uniformly  to  zero when  $M \rightarrow +\infty.$ It follows that
	$$
	\tilde{A}^n\left(\frac{\pi^{-1}(x)-\pi^{-1}(y)}{||\pi^{-1}(x)-\pi^{-1}(y)||}\right)\leq\eta^n v^u+\tilde{A}^ne_M=
	\eta^n\left(\frac{\pi^{-1}(x)-\pi^{-1}(y)}{||\pi^{-1}(x)-\pi^{-1}(y)||}\right)-\eta^n e_M + \tilde{A}^ne_M
	$$
	it implies that
	$$
	\frac{||\tilde{A}^n(\pi^{-1}(x)-\pi^{-1}(y))||}{||\pi^{-1}(x)-\pi^{-1}(y)||}\leq\left\|\eta^n\left(\frac{\pi^{-1}(x)-\pi^{-1}(y)}{||\pi^{-1}(x)-\pi^{-1}(y)||}\right)
	-\eta^n e_M +\tilde{A}^ne_M\right\|
	$$
	thus
	\begin{align*}
	||\tilde{A}^n(\pi^{-1}(x)-\pi^{-1}(y))||
	\leq||\pi^{-1}(x)-\pi^{-1}(y)||(\eta^n+\eta^n||e_M||+||\tilde{A}^n||||e_M||).
	\end{align*}
	Since  $n$ is fixed and  $||e_M||\rightarrow 0$ when $M\rightarrow\infty$, choose a large  $M$ such that
	$$
	\eta^n||e_M||+||\tilde{A}^n||||e_M||\leq\varepsilon\eta^n
	$$
	we conclude that
	$$
	||\tilde{A}^n\pi^{-1}(x)-\tilde{A}^n\pi^{-1}(y)||
	\leq(1+\varepsilon)\eta^n||\pi^{-1}(x)-\pi^{-1}(y)||.
	$$
\end{proof}

\textbf{Claim 3:} Give $\varepsilon>0$, there is $M > 0$ such that $\tilde{f}^n(\pi^{-1}(U_r)) \subset \pi^{-1}((1+ \varepsilon)^n\tilde{A}^n(U_r)),$ for every $r>M$ and  $n \geq 1,$ up to translation of $(1+ \varepsilon)^n\tilde{A}^n(U_r).$ The notation $(1+ \varepsilon)^n\tilde{A}^n(U_r)$ represents a homothety of the $\tilde{A}^n(U_r)$ by the factor  $(1+ \varepsilon)^n$.

\begin{proof}

For $\varepsilon>0$ let $C=1+\varepsilon$, by the Proposition \ref{Prop Hamm 1}, there is $M>0$ such that if $x,y\in \widetilde{E}^{cu}$ with 
$ ||x - y|| \geq M$ we have

\begin{equation}\label{eq023}
||\tilde{f}\pi^{-1}(x)-\tilde{f}\pi^{-1}(y)||\leq (1+\varepsilon)||
\tilde{A}\pi^{-1}(x)-\tilde{A}\pi^{-1}(y)||
\end{equation}

Let $U_r$ as in the statement. By the Claim 1 and since for all $y\in \partial U_r$ satisfies the Equation \ref{eq023}, we have

\begin{equation*}
\tilde{f}(\pi^{-1}(U_r)) \subset \pi^{-1}((1+ \varepsilon)\tilde{A}(U_r)),
\end{equation*}
up to a possible translation of $\tilde{A}(U_r),$ see Figure 1.

\begin{figure}[!htb]
	\centering
	\includegraphics[scale=0.8]{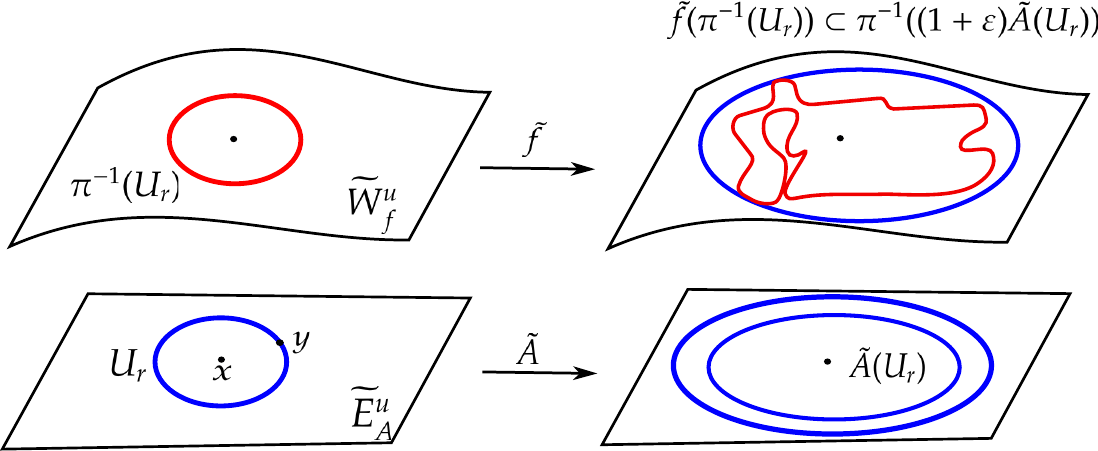}
	\caption{}\label{cone1}
\end{figure}

Now, denote $U_{r,1} = (1+ \varepsilon)\tilde{A}(U_r) .$  It is important to note that if $||x - y|| \geq M$, then
\mbox{$||\tilde{f}\pi^{-1}(x)-\tilde{f}\pi^{-1}(y)||>M$} too. Indeed, we can take $M$ satisfying also the Proposition \ref{Prop Hamm 1} and  Claim 2, so

\begin{align*}
||\tilde{f}\pi^{-1}(x)-\tilde{f}\pi^{-1}(y)||&\geq (1+\varepsilon)^{-1}||
\tilde{A}\pi^{-1}(x)-\tilde{A}\pi^{-1}(y)||\\
&\geq (1+\varepsilon)^{-1}\mu||\pi^{-1}(x)-\pi^{-1}(y)||\\
&\geq ||x - y|| \geq M,
\end{align*}

if $\varepsilon$ is enough small.

So we can do the same process as before, for $U_{r,1},$ we obtain:
$$\tilde{f}(\pi^{-1}(U_{r,1})) \subset \pi^{-1}((1+ \varepsilon)\tilde{A}(U_{r,1})).$$

Since $\tilde{f}^2(\pi^{-1}(U_r)) \subset \tilde{f}(\pi^{-1}(U_{r,1})),$ it implies
$$\tilde{f}^2(\pi^{-1}(U_r)) \subset \pi^{-1}((1+ \varepsilon)^2\tilde{A}^2(U_r)). $$

Following this inductive argument, we conclude that

\begin{equation}\label{contem}
\tilde{f}^n(\pi^{-1}(U_r)) \subset \pi^{-1}((1+ \varepsilon)^n\tilde{A}^n(U_r)),
\end{equation}
for every $n \geq 1.$

 \end{proof}

By Lemma \ref{lema Lp} follows that
\begin{align*}
\vol(\tilde{f}^n\pi^{-1}(U_r))&\leq\vol(\pi^{-1}((1+\varepsilon)^{2n}\tilde{A}^n(U_r))= C_0(1+ \varepsilon)^{nd_u} e^{n \sum_{i = 1}^{d_u} \lambda^u_i(A)}\vol(U_r),
\end{align*}
The constant  $C_0,$ comes from Lemma \ref{lema Lp}. It concludes the proof of the Proposition \ref{Prop vol untable}.
\end{proof}

\begin{proof}[Proof of Theorem \ref{TeoA}]

Suppose that  $Z$ meets some unstable leaf in a positive Lebesgue measure set of the leaf and we will prove the desired equality.

First we  prove that $\displaystyle\sum_{i = 1}^{d-1} \lambda^u_i(f, \tilde{m}) \leq  \sum_{i = 1}^{d-1} \lambda^u_i(A).$ Suppose by contradiction which $\displaystyle\sum_{i = 1}^{d-1} \lambda^u_i(f, \tilde{m}) >  \sum_{i = 1}^{d-1} \lambda^u_i(A).$

Denote by $Z^u_p=Z\cap W^u_f(p)$, where $W^u_f(p)$ is the leaf which meets $Z$ in a positive Lebesgue measure and denoted by ``Vol"  the volume  $d-1$ of the leaves $W^u_f$. So, Vol$(Z^u_p)>0$.

Let $P:{\R}^d\rightarrow {\T}^d$ be the covering map, $D\subset{\R}^d$ a fundamental domain  and denote \mbox{$\widetilde{Z^u_p}=P^{-1}(Z^u_p)\cap D$}. Since $D\tilde{f}^n$ and $Df^n$ are conjugate matrices,  \mbox{$D\tilde{f}^n=DP^{-1}\circ Df^n\circ DP$}, it follows that
$\sum_i\lambda_i^u(\tilde{f},x)>\sum_i\lambda_i^u(A)$ for any $x\in \widetilde{Z^u_p}$.

For each $q\in {\N}-\{0\}$ we define the set
$$
Z_q=\left\{x\in \widetilde{Z^u_p};\sum_i\lambda_i^u(\tilde{f},x)>\sum_i\lambda_i^u(A)+\log\left(1+\frac{1}{q}\right)\right\}.
$$
Clearly $\bigcup_{q=1}^{\infty}Z_q=\widetilde{Z^u_p}$, thus there is  $q$ such that Vol$(Z_q)>0$. For each $x\in Z_q$ follows that
$$
\displaystyle\lim_{n\rightarrow\infty}\frac{1}{n}\log|\jac \tilde{f}^n(x)|_{\widetilde{E}^u}|>\sum_i\lambda_i^u(A)+\log\left(1+\frac{1}{q}\right).
$$
So there is  $n_0$ such that for   $n\geq n_0$  we have
\begin{align*}
\frac{1}{n}\log|\jac \tilde{f}^n(x)|_{\widetilde{E}^u}|&>\sum_i\lambda_i^u(A)+\log\left(1+\frac{1}{q}\right)\\
                                         &>\frac{1}{n}\log e^{n\sum_i\lambda_i^u(A)}+\frac{1}{n}\log\left(1+\frac{1}{q}\right)^{n},
\end{align*}
it implies that
$$
|\jac \tilde{f}^n(x)|_{\widetilde{E}^u}|>\left(1+\frac{1}{q}\right)^{n}e^{n\sum_i\lambda_i^u(A)}.
$$
By this fact, for every $n>0$ we define
$$
Z_{q,n}=\left\{x\in Z_q;|\jac \tilde{f}^k(x)|_{\widetilde{E}^u}|>\left(1+\frac{1}{q}\right)^{k}e^{k\sum_i\lambda_i^u(A)},\,\,\forall\,\,k\geq n \right\}.
$$
There is $N>0$ with $\vol(Z_{q,N})>0,$ for some integer $q > 0.$

There is a set $\pi^{-1}(U_r)\subset \widetilde{W}^u_f(p)$, where $\pi^{-1}(U_r)$ is as in the Proposition \ref{Prop vol untable}
containing $p$ and $r$ is enough large
such that \mbox{$\vol(\pi^{-1}(U_r)\cap Z_{q,N})>0.$} Choose a   number $\varepsilon > 0$ as in Proposition \ref{Prop vol untable}, such that $(1 + \varepsilon)^{d_u} < \left(1 + \frac{1}{q}\right).$ By Proposition \ref{Prop vol untable} we have

\begin{equation}\label{eq4}
\vol(\tilde{f}^n(\pi^{-1}(U_r)))\leq C_0(1+\varepsilon)^{n d_c} e^{n\sum\lambda_i^u(A)}\vol(U_r),
\end{equation}
for any $n \geq 1.$

On the other hand, let  $\alpha>0$ be such that $\vol(\pi^{-1}(U_r)\cap Z_{q,N})=\alpha \vol(\pi^{-1}(U_r)),$ for $n > N,$ we have
\begin{align}\label{eq5}
\vol(\tilde{f}^n(\pi^{-1}(U_r)))&=\displaystyle\int_{\pi^{-1}(U_r)}|\jac \tilde{f}^n(x)|_{\widetilde{E}^u}| d \vol \nonumber\\
                &\geq\displaystyle\int_{\pi^{-1}(U_r)\cap Z_{q,N}}|\jac \tilde{f}^n(x)|_{\widetilde{E}^u}| d \vol\nonumber\\
                &>\displaystyle\int_{\pi^{-1}(U_r)\cap Z_{q,N}}\left(1+\frac{1}{q}\right)^{n}e^{n\sum_i\lambda_i^u(A)}d \vol\nonumber\\
                &>\left(1+\frac{1}{q}\right)^{n}e^{n\sum_i\lambda_i^u(A)} \vol(\pi^{-1}(U_r)\cap Z_{q,N})\nonumber\\
                &>\left(1+\frac{1}{q}\right)^{n}e^{n\sum_i\lambda_i^u(A)} \alpha\vol(\pi^{-1}(U_r))\nonumber\\
                &>\left(1+\frac{1}{q}\right)^{n}e^{n\sum_i\lambda_i^u(A)} \alpha\vol(U_r).
\end{align}
The equations $(\ref{eq4})$ and $(\ref{eq5})$ given us a contradiction when  $n$ is enough large, thus proving the desired inequality.

 We prove now the second inequality and for this, we use the known Ruelle's inequality, in fact,

$$
\sum_{i = 1}^{d-1} \lambda^u_i(A)=h_m(A)=h_{\tilde{m}}(f)\leq   \displaystyle\sum_{i = 1}^{d-1} \lambda^u_i(f, \tilde{m})
$$

Finally, since $\tilde{m}$ satisfies the Pesin entropy formula, thus $\tilde{m}$ has absolutely continuous conditional measures along $W^u_f-$leaves, see \cite{QZ}. Since the restriction of $h$ to stable leaves is $C^1,$ also $\tilde{m}$  has absolutely continuous conditional measures along $W^s_f-$leaves. Since $f$ is $C^2,$ the foliations $W^s_f$ and $W^u_f$ are absolutely continuous foliation, thus $\tilde{m}$ is absolutely continuous. The proof of the theorem is complete.

\end{proof}

\section{Proof of Theorem \ref{TeoB} }

The next lemmas are important to prove Theorems \ref{TeoB} and \ref{TeoC}.

\begin{lemma} \label{lem1}
	Let $f: M\rightarrow M$ be a transitive Anosov endomorphism with a degree
	$k\geq1$, where $M$ is a $C^{\infty}$ compact and connect Riemannian
	manifold, then $f$ preserves a $C^1$ volume form $m$ on $M$ if and only if
	$\jac f^n(p)=k^n,$ for any $p\in M,$ such that $f^n(p)=p$, with $n\geq 1$.
\end{lemma}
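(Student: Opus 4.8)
The plan is to turn the statement into a cohomological equation for the logarithmic Jacobian and then solve it by a Livšic-type argument on the natural extension. Write the candidate form as $m=\rho\,d\vol$ with $\rho>0$ of class $C^1$; that $f$ preserves $m$ means $f^{\ast}m=k\,m$, i.e. $\jac f(x)=k\,\rho(x)/\rho(f(x))$ for every $x$, which is the normalization forced by $\deg f=k$ and is satisfied by Lebesgue measure in the linear model. Setting $\varphi:=\log\jac f-\log k$ and $u:=\log\rho$, this is exactly $\varphi=u-u\circ f$. Moreover the hypothesis $\jac f^n(p)=k^n$ for every $p$ with $f^n(p)=p$ is precisely the vanishing of the Birkhoff sums $\sum_{i=0}^{n-1}\varphi(f^ip)=\log\jac f^n(p)-n\log k=0$ over all periodic orbits. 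So the lemma reduces to: $\varphi$ is a $C^1$ coboundary $\iff$ all its periodic sums vanish.

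Necessity is immediate by telescoping: if $\varphi=u-u\circ f$ and $f^n(p)=p$, then
\[
\log\jac f^n(p)=\sum_{i=0}^{n-1}\log\jac f(f^ip)=n\log k+\sum_{i=0}^{n-1}\bigl(u(f^ip)-u(f^{i+1}p)\bigr)=n\log k,
\]
because the last sum telescopes to $u(p)-u(f^np)=0$; hence $\jac f^n(p)=k^n$.

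For sufficiency I would pass to the natural extension $\hat f\colon M^f\to M^f$, which is an expansive homeomorphism with shadowing, topologically transitive because $f$ is, and whose periodic orbits correspond bijectively to those of $f$. The function $\hat\varphi:=\varphi\circ p$ is Hölder (the projection $p$ is $1$-Lipschitz for $\hat d$ and, assuming $f\in C^2$, $\varphi$ is $C^1$) and has vanishing sums over every $\hat f$-periodic orbit. The classical Livšic theorem for $\hat f$ then produces a continuous (Hölder) $\hat u\colon M^f\to\R$ with $\hat\varphi=\hat u-\hat u\circ\hat f$. The key remaining point is that $\hat u$ descends to $M$. If $p(\tilde x)=p(\tilde y)=x_0$, then $p(\hat f^i\tilde x)=f^i(x_0)=p(\hat f^i\tilde y)$ for all $i\ge0$, so $\hat\varphi(\hat f^i\tilde x)=\hat\varphi(\hat f^i\tilde y)$; iterating the cohomological equation gives
\[
\hat u(\tilde x)-\hat u(\tilde y)=\hat u(\hat f^n\tilde x)-\hat u(\hat f^n\tilde y)\qquad(n\ge0),
\]
while $\hat d(\hat f^n\tilde x,\hat f^n\tilde y)\le\diam(M)\,2^{-n}\to0$, so by continuity of $\hat u$ the right side tends to $0$. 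Hence $\hat u=u\circ p$ for a continuous $u\colon M\to\R$, and since $p$ is onto, $\varphi=u-u\circ f$ on $M$.

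It remains to upgrade $u$ to class $C^1$, so that $\rho=e^{u}$ furnishes the desired $C^1$ volume form, and this is the main obstacle. I would appeal to the regularity theory for cohomological equations over hyperbolic systems. Along a stable manifold one has the convergent expression $u(x)-u(y)=\sum_{i\ge0}\bigl(\varphi(f^ix)-\varphi(f^iy)\bigr)$, whose terms decay geometrically by stable contraction, so $u$ is $C^1$ along stable leaves; the analogous statement along unstable leaves holds for the invertible model $\hat f$, where unstable manifolds are genuinely defined. A Journé-type lemma then promotes this joint leafwise regularity to $u\in C^1$. The delicate part specific to the endomorphism setting is exactly this last step: unstable manifolds are not canonical on $M$, so the unstable regularity must be obtained on $M^f$ and transported through the descent $\hat u=u\circ p$; equivalently, one checks that the formally differentiated equation $Du=(Du\circ f)\,Df+D\varphi$, a cohomological equation over the hyperbolic cocycle $Df$, admits a continuous solution (necessarily $Du$), which is where hyperbolicity and the $C^2$ hypothesis are used.
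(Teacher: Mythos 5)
The paper itself does not prove this lemma --- it cites \cite{Mic22measurable} --- so I am comparing your proposal to the standard route there. Your sufficiency direction is in the right spirit and its best part is correct: passing to the natural extension, applying Liv\v{s}ic over the Smale space $(M^f,\hat f)$ (periodic orbits of $\hat f$ biject with those of $f$, and $\hat\varphi=\varphi\circ p$ is Lipschitz for $\hat d$), and then the descent argument --- $p(\tilde x)=p(\tilde y)$ forces $x_i=y_i$ for all $i\ge 0$, hence $\hat u(\tilde x)-\hat u(\tilde y)=\hat u(\hat f^n\tilde x)-\hat u(\hat f^n\tilde y)$ while $\hat d(\hat f^n\tilde x,\hat f^n\tilde y)\le \diam(M)2^{-n}$ --- is a clean and correct way to obtain $u$ on $M$ with $\varphi=u-u\circ f$. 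The upgrade from continuous to $C^1$ is only sketched: the Journ\'e-type step needs care because there is no global unstable foliation of $M$ for a non-special endomorphism, though your fix (leafwise regularity on $M^f$ transported through $\hat u=u\circ p$, or equivalently working with a local unstable lamination attached to a fixed local past) is the standard and workable one; note also that everything from Liv\v{s}ic onward tacitly needs $f$ at least $C^{1+\alpha}$, while the lemma is stated for $C^1$ Anosov endomorphisms.

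The genuine gap is at the very first step, and it breaks your necessity direction. You declare that ``$f$ preserves $m$'' means $f^{\ast}m=k\,m$, i.e.\ the branchwise identity $\jac f(x)=k\,\rho(x)/\rho(f(x))$ for \emph{every} $x$. But invariance of the measure, $f_{\ast}m=m$, only gives the summed transfer-operator equation $\sum_{f(x)=y}\rho(x)/\jac f(x)=\rho(y)$, of which your identity is the special case where all $k$ branches carry equal weight $1/k$; asserting their equivalence is not ``a normalization forced by $\deg f=k$'', it is precisely the hard content of the lemma. That the implication is not formal is shown by expanding maps: take inverse branches of a degree-$2$ circle map with derivatives $\tfrac12\pm b(y)$, where $b$ is smooth, $b(y+1)=-b(y)$, $0<\sup|b|<\tfrac12$; the resulting smooth expanding map preserves Lebesgue, yet at a fixed point $p$ one gets $f'(p)\ne 2$ whenever $b$ does not vanish at the corresponding point, so measure invariance does \emph{not} imply $\jac f^n(p)=k^n$ there. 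Consequently, any correct proof of the necessity half for Anosov endomorphisms must actually use the nontrivial stable bundle (absolute continuity along stable leaves, or the SRB/entropy structure on both sides, as in \cite{Mic22measurable}); your telescoping argument never invokes $E^s$ at all --- indeed it would ``prove'' the statement for expanding maps, where it is false --- so the necessity direction as written is not a proof but a restatement of what must be shown.
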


The proof of this Lemma can be found in \cite{Mic22measurable}.

\begin{lemma}\label{lem2}
	Let $A:{\T}^n\rightarrow {\T}^n$ be a linear Anosov endomorphism as in the statement of Theorem C. Then there is an open $C^1$ neighborhood $\mathcal{U}$ of $A$, such that for all $f\in\mathcal{U}$ is Anosov endomorphism with partially hyperbolic decomposition, furthermore if $f$ is special, then it is dynamically coherent with quasi-isometric $wu-$foliation.
\end{lemma}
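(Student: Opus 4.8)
The plan is to split the statement into two parts: first, that being Anosov together with the partially hyperbolic decomposition persists on a $C^1$-neighborhood $\mathcal{U}$ of $A$; and second, that for \emph{special} $f\in\mathcal{U}$ the weak-unstable bundle integrates to a quasi-isometric foliation.

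For the first part I would pass to the lift. By Proposition \ref{Prop-Mane}, $f$ is an Anosov endomorphism if and only if $\tilde f:\R^d\to\R^d$ is an Anosov diffeomorphism, and since $f\mapsto\tilde f$ is continuous for the $C^1$ topology and the Anosov property of diffeomorphisms is $C^1$-open, there is a $C^1$-neighborhood $\mathcal{U}$ of $A$ whose elements are all Anosov endomorphisms. By the hypotheses inherited from Theorem \ref{TeoC}, the splitting $E^u_A=E^u_1\oplus\cdots\oplus E^u_{d-1}$ is dominated into one-dimensional bundles; dominated splittings are robust under $C^1$ perturbation, so for $f\in\mathcal{U}$ the grouping $E^s_{\tilde f}\oplus E^{wu}_{\tilde f,1,k}\oplus E^{su}_{\tilde f,k+1,d-1}$ persists as a dominated, hence partially hyperbolic, splitting of $\tilde f$, with the rates $0<\nu<\gamma_1\le\gamma_2<\mu$ (all center rates exceeding $1$) inherited from $A$ by continuity. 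Shrinking $\mathcal{U}$ if necessary, the bundles are $C^0$-close to the constant linear bundles of $A$, uniformly on $\R^d$ by $\Z^d$-periodicity of the lifted splitting.

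For the second part I would establish dynamical coherence before quasi-isometry. When $f$ is special, $E^u_f$ is defined independently of the pre-orbit, and by Lemma 2.5 and Proposition 2.10 of \cite{CM22} the center bundle $E^{wu}_{f,1,k}$ is then uniquely and globally defined on $\T^d$. The full unstable distribution integrates to $W^u_f$, so $E^{cu}=E^u_f$ is integrable; and within each unstable leaf the restricted dynamics is expanding and carries the dominated flag $E^{wu}_{\tilde f,1,1}\subset\cdots\subset E^u_{\tilde f}$, whose weak sub-bundles integrate to the intermediate (slow) foliations inside the leaf by \cite{pesin2004lectures}. By uniqueness of the center these assemble into a global $f$-invariant foliation $W^{wu}_{1,k}$ tangent to $E^{wu}_{f,1,k}$, and saturating its leaves by the stable foliation $W^s_f$ (the stable being the strongest contraction, so the plaques fit together) produces $W^{cs}$ tangent to $E^s_f\oplus E^{wu}_{f,1,k}$; hence $f$ is dynamically coherent. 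Finally, for quasi-isometry I would invoke Brin's criterion, Proposition \ref{B1}, applied to $\widetilde{W}^{wu}_{1,k}$ with the complementary plane $P=(E^{wu}_{A,1,k})^{\perp}$ of dimension $d-k$: since $E^{wu}_{\tilde f,1,k}(x)$ is $C^0$-close to the constant plane $E^{wu}_{A,1,k}$, which is transverse to $P$, one has $T_x\widetilde{W}^{wu}_{1,k}\cap P=\{0\}$ and $\angle(T_x\widetilde{W}^{wu}_{1,k},P)>\beta>0$ uniformly in $x\in\R^d$, and Proposition \ref{B1} yields quasi-isometry.

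I expect the main obstacle to be the dynamical coherence step, namely turning the per-leaf slow foliations into a genuinely globally coherent foliation of $\T^d$ tangent to $E^{wu}_{f,1,k}$; this is precisely where specialness enters, through the uniqueness of the center bundle from \cite{CM22}, without which the intermediate directions would depend on the chosen pre-orbit and fail to foliate the base. By contrast, the $C^1$-openness of the Anosov condition and the Brin angle estimate are routine once the bundles are known to be close to the linear model.
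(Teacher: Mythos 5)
Your first part (openness of the Anosov property plus persistence of the dominated splitting) matches the paper, and your final Brin-angle argument for quasi-isometry of $\widetilde{W}^{wu}_{1,k}$ is fine. The genuine gap is in your dynamical coherence step, and it is exactly the step you flagged as the main obstacle. You claim that inside each unstable leaf the weak sub-bundles of the dominated flag ``integrate to the intermediate (slow) foliations inside the leaf by \cite{pesin2004lectures}''. That reference gives you the persistence of the dominated splitting $E^u_{\tilde f}=E^u_{\tilde f,1}\oplus\cdots\oplus E^u_{\tilde f,d-1}$ under $C^1$ perturbation, but it does \emph{not} give integrability of the slow sub-bundles: dominated slow distributions (of dimension $\geq 2$, and even one-dimensional ones as far as \emph{unique} integrability is concerned) need not integrate to a foliation in general. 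Compounding this, your assembly step infers a global foliation from ``uniqueness of the center bundle'' via \cite{CM22}; but uniqueness of the bundle $E^{wu}_{f,1,k}$ does not imply uniqueness (or existence) of its integral manifolds --- a uniquely defined continuous distribution can admit several tangent local leaves through a point. Your stable-saturation construction of $W^{cs}$ inherits the same unproved coherence.

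The paper's proof runs in the opposite order precisely to avoid this: quasi-isometry is not a final add-on but the engine that produces coherence. One first shrinks $\mathcal{U}$ so that $\widetilde{E}^{su}_f$ stays uniformly transverse to $\widetilde{E}^s_A\oplus\widetilde{E}^{wu}_A$ and $\widetilde{E}^s_f$ to $\widetilde{E}^u_A$; Proposition \ref{B1} (as in Lemma \ref{lem01}) then makes the lifted strong stable and unstable foliations quasi-isometric, and Brin's theorem \cite{B} yields dynamical coherence of $\tilde f$ --- this is what settles the integrability of $E^s_f\oplus E^{wu}_{f,1,k}$ that you tried to obtain leafwise. Only afterwards does one descend to $\T^d$: uniqueness of the $cs$-bundle (Lemma 2.5 of \cite{CM22}) plus coherence of the lift rules out two distinct local $cs$-leaves through a point, and specialness (which makes $W^u_f$ itself well defined downstairs) gives $W^{wu}_f=W^{cs}_f\cap W^u_f$. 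So specialness enters where you said it should, but the integrability itself must come from the quasi-isometry criterion, not from per-leaf slow foliations; as written, your proof of coherence would fail at the citation to \cite{pesin2004lectures}.
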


\begin{proof}
 There is an open $C^1$ neighborhood $\mathcal{U}$ such that for all $f\in\mathcal{U}$ is an Anosov endomorphism with partially hyperbolic decomposition. Consider $\tilde{A}$ and $\tilde{f}$ in the universal cover ${\R}^n$. By Proposition \ref{Prop-Mane}, $\tilde{A}$ and $\tilde{f}$ are diffeomorphisms.
Given $\alpha>0$, the set $\mathcal{U}$ can be take such for all $f\in\mathcal{U}$ we have $\angle(\widetilde{E}^s_A\oplus\widetilde{E}^{wu}_A,\widetilde{E}^{su}_f)>\alpha$ and  $\angle(\widetilde{E}^u_A, \widetilde{E}^{s}_f)>\alpha$, then as the Lemma \ref{lem01} and using the Proposition \ref{B1} the foliations   $\widetilde{W}^u_f$ and $\widetilde{W}^s_f$ are quasi-isometric and by \cite{B} $\tilde{f}$ is dynamically coherent.

Now let's show that $W^{wu}_f$ is well defined. Consider $W^{wu}_f$ projected of the universal cover. We know that the foliation $W^u_f$ tangent to $E^{wu}_f\oplus E^{su}_f$ is well defined and  by Lemma 2.5 of \cite{CM22}  the bundle $E^{wu}_f\oplus E^s_f$ is uniquely defined. If there was a point $p$ admitting two different local leaves $W^{cs}_{1,f}(p)$ and $W^{cs}_{1,f}(p)$ tangent to $E^{wu}_f\oplus E^s_f$, by invariance of $E^{wu}_f\oplus E^s_f$ we could lift these local leaves to the same level and so the local leaves $\widetilde{W}^{cs}_{1,f}(p)$ and $\widetilde{W}^{cs}_{1,f}(p)$ contradicts the dynamic coherence of $\tilde{f}$,  hence $W^{cs}$ tangent to $E^{wu}_f\oplus E^s_f$ is well defined too. Then since $f$ is special the foliation $W^{wu}_f=W^{cs}_f\cap W^{u}_f$ is well defined.
\end{proof}

\begin{lemma}\label{lem3}
	Let  $\mathcal{U}$ as in Lemma \ref{lem2},  $f\in\mathcal{U}$ a special Anosov endomorphism and $\tilde{h}$ the conjugacy between $\tilde{f}$ and $\tilde{A}$ in the universal cover, then $\tilde{h}(\widetilde{W}^{\sigma}_f(x))=\widetilde{W}^{\sigma}_A(\tilde{h}(x))$ for $\sigma\in\{s,u,wu\}$.
\end{lemma}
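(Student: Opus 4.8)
The plan is to treat the three foliation types separately, in increasing order of difficulty, and to use throughout that, by Proposition \ref{Prop-Mane}, the lifts $\tilde f,\tilde A:\R^d\to\R^d$ are diffeomorphisms, that $\tilde h$ intertwines them with the convention $\tilde h\circ\tilde f=\tilde A\circ\tilde h$ (so that $\tilde A^n\circ\tilde h=\tilde h\circ\tilde f^n$ for every $n\in\Z$; replace $\tilde h$ by $\tilde h^{-1}$ if the opposite convention is used), and that both $\tilde h$ and $\tilde h^{-1}$ are at bounded distance from the identity. Indeed $\tilde h-\mathrm{id}$ descends to $\T^d$, hence $\|\tilde h-\mathrm{id}\|_\infty=:K_0<\infty$, and likewise for $\tilde h^{-1}$; in particular $\tilde h$ and $\tilde h^{-1}$ are uniformly continuous.

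For $\sigma=s$ I would use the purely topological description of stable leaves: $y\in\widetilde W^s_f(x)$ iff $d(\tilde f^n x,\tilde f^n y)\to0$ as $n\to+\infty$, while $\widetilde W^s_A(\tilde h x)=\tilde h x+\widetilde E^s_A$ is exactly the set of forward $\tilde A$-asymptotic points. Uniform continuity of $\tilde h$ turns forward contraction of $(x,y)$ under $\tilde f$ into forward contraction of $(\tilde h x,\tilde h y)$ under $\tilde A$ (using $\tilde A^n\tilde h=\tilde h\tilde f^n$), giving $\tilde h(\widetilde W^s_f(x))\subset\widetilde W^s_A(\tilde h x)$; applying the same to $\tilde h^{-1}$ yields equality. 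The case $\sigma=u$ is identical with $n\to+\infty$ replaced by $n\to-\infty$, the lifts being diffeomorphisms.

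The essential case is $\sigma=wu$, and the plan is to prove the two inclusions by different means. For $\subset$: given $q=\tilde h(y)$ with $y\in\widetilde W^{wu}_f(x)$, the case $\sigma=u$ already gives $q\in\widetilde W^u_A(\tilde h x)=\tilde h x+\widetilde E^u_A$, so I write $q-\tilde h x=v_{wu}+v_{su}$ with $v_{wu}\in\widetilde E^{wu}_A$ and $v_{su}\in\widetilde E^{su}_A$, the goal being $v_{su}=0$. Iterating, $\tilde A^n q=\tilde h(\tilde f^n y)$ with $\tilde f^n y\in\widetilde W^{wu}_f(\tilde f^n x)$, which by Proposition \ref{Prop Hamm 3}(3) lies within $R_c$ of the affine plane $\tilde f^n x+\widetilde E^{wu}_A$; together with $\|\tilde h-\mathrm{id}\|\le K_0$ this places both $\tilde A^n q$ and $\tilde A^n\tilde h x=\tilde h(\tilde f^n x)$ within a uniform distance of that same plane. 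Hence $\tilde A^n(q-\tilde h x)$ stays within bounded distance of $\widetilde E^{wu}_A$, so its $\widetilde E^{su}_A$-component $\tilde A^n v_{su}$ is bounded uniformly in $n$ (the angle between $\widetilde E^{wu}_A$ and $\widetilde E^{su}_A$ being a fixed positive constant). Since $\tilde A$ expands $\widetilde E^{su}_A$ at a rate $\ge\mu_A>1$, this forces $v_{su}=0$, i.e. $q\in\widetilde W^{wu}_A(\tilde h x)$. For the reverse inclusion I would avoid repeating this computation on the non-affine $f$-side and argue topologically: $\tilde h$ restricts to a continuous injection of $\widetilde W^{wu}_f(x)\cong\R^k$ into $\widetilde W^{wu}_A(\tilde h x)\cong\R^k$, so by invariance of domain its image is open in $\widetilde W^{wu}_A(\tilde h x)$; since $\widetilde W^{wu}_f(x)$ is closed in $\R^d$ (the $wu$-foliation being quasi-isometric, hence with properly embedded leaves, by Lemma \ref{lem2}), its image is also closed; being nonempty, open and closed in the connected leaf $\widetilde W^{wu}_A(\tilde h x)$, it is the whole leaf.

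The main obstacle is the weak-unstable case, and within it the first inclusion: here one genuinely needs the partially hyperbolic geometry supplied by Lemma \ref{lem2} (dynamical coherence and quasi-isometry of $\widetilde W^{wu}_f$, so that $\widetilde W^{wu}_f(x)$ is well defined and the Hammerlindl bounded-distance estimate of Proposition \ref{Prop Hamm 3}(3) applies), rather than a soft conjugacy argument, since conjugacy preserves neither Lyapunov rates nor any purely topological description of the $wu$-leaves. The remaining technical points to verify carefully are the uniform positivity of $\angle(\widetilde E^{wu}_A,\widetilde E^{su}_A)$ (immediate, as these are fixed complementary eigenspaces of $A$) and the properness of the $wu$-leaves used in the invariance-of-domain step.
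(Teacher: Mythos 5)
Your proof is correct, and for $\sigma\in\{s,u\}$ it matches what the paper does in one line (``$\tilde{h}$ matches unstable and stable manifolds''), but for the essential inclusion $\tilde{h}(\widetilde{W}^{wu}_f(x))\subset \widetilde{W}^{wu}_A(\tilde{h}(x))$ you take a genuinely different route. The paper argues by contradiction with a growth-rate comparison: if $y\in\widetilde{W}^{wu}_f(x)$ but $\tilde{h}(y)\notin\widetilde{W}^{wu}_A(\tilde{h}(x))$, then $\tilde{h}(x)-\tilde{h}(y)$ has a nontrivial strong-unstable component, so by linearity $\|\tilde{A}^j\tilde{h}(x)-\tilde{A}^j\tilde{h}(y)\|\geq C_A\mu_A^j\|\tilde{h}(x)-\tilde{h}(y)\|$ with $\mu_A$ the smallest strong-unstable eigenvalue modulus; writing $\tilde{h}=\mathrm{Id}+\varphi$ with $\varphi$ bounded, this forces $\|\tilde{f}^j(x)-\tilde{f}^j(y)\|$, hence the leafwise distance inside $\tilde{f}^j(\widetilde{W}^{wu}_f(x))$, to grow at rate $\mu_A^j$, contradicting the fact that for $f$ sufficiently $C^1$-close to $A$ the expansion along the $wu$-bundle stays close to the weak rates of $A$, which are strictly below $\mu_A$. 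You instead bound the strong component on the $A$-side: Proposition \ref{Prop Hamm 3}(3) together with $\|\tilde{h}-\mathrm{id}\|\leq K_0$ keeps $\tilde{A}^n(\tilde{h}(y)-\tilde{h}(x))$ within uniform distance of $\widetilde{E}^{wu}_A$, so the component $\tilde{A}^n v_{su}$ is bounded in $n$ and the expansion of $A$ on $E^{su}_A$ forces $v_{su}=0$. The trade-off: the paper's argument is self-contained modulo the closeness of partially hyperbolic constants (it genuinely needs an upper bound on the $wu$-expansion of $f$, i.e.\ the $C^1$-proximity to $A$), whereas yours delegates that work to Hammerlindl's shadowing estimate and uses only the strong expansion of the linear map, so it requires no quantitative control of the rates of $f$ beyond dynamical coherence and quasi-isometry from Lemma \ref{lem2}. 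You also supply something the paper leaves implicit: its contradiction argument only yields the inclusion of $\tilde{h}(\widetilde{W}^{wu}_f(x))$ in the $A$-leaf, while your invariance-of-domain argument (openness of the image plus closedness, the latter from proper embeddedness of quasi-isometric leaves) upgrades this to the stated equality of leaves, which is the form actually used later in the proofs of Theorems \ref{TeoB} and \ref{TeoC}.
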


\begin{proof}

Here we are considering $f$ with two decompositions of tangent space $TM$ of
${\T^n}.$ One is the hyperbolic decomposition $TM = E^s\oplus E^u$ and the other is partially hyperbolic decomposition $TM =E^s\oplus E^{wu}\oplus E^{su}.$
Since $\tilde{h}$ is a conjugacy between $\tilde{A}$ and $\tilde{f},$ then $\tilde{h}$ matches unstable and stable manifolds.

Now consider $E^{wu}_f = E^u_{f,1} \oplus \ldots \oplus \ldots E^u_{f,k} $ and $ W^{wu}_f = W^u_{1,k}$ the corresponding tangent foliation to $E^{wu}_f,$ where $1 \leq k < d-1.$ Let us to prove that $\tilde{h}(\widetilde{W}^{wu}_f) =  \widetilde{W}^{wu}_A,$ that is  $\tilde{h}(\widetilde{W}^{wu}_f(x))=\widetilde{W}^{wu}_A(\tilde{h}(x)),$ for every $x \in \mathbb{R}^n.$

Suppose by contradiction
which $\tilde{h}(x)\notin \widetilde{W}^{wu}_A(\tilde{h}(y))$, by the Claim 1 we have that \mbox{$\tilde{h}(x)\in \widetilde{W}^{u}_A(\tilde{h}(y))$}  then, by linearity of $A,$ we have
$$
||\tilde{h}(\tilde{f}^j(x))-\tilde{h}(\tilde{f}^j(y))||=||\tilde{A}^j(\tilde{h}(x))-\tilde{A}^j(\tilde{h}(y))||\geq  C_A \mu_A^j ||\tilde{h}(x)-\tilde{h}(x)||,
$$ where $\mu_A = \min \{ |\alpha^u_i|, i =k+1, \ldots, n  \},$ the numbers $ \alpha^u_i, i =k+1, \ldots, n$ are the eigenvalues of $\tilde{A}$ with absolute values bigger than one and $C_A$ is a constant depending on $A$ and the given $x$ and $y.$

Observe that $\tilde{h} = Id_{\mathbb{R}^n} + \varphi,$ where $\varphi: \mathbb{R}^n \rightarrow  \mathbb{R}^n$ is continuous and bounded on $\mathbb{R}^n $ and null on $\mathbb{Z}^n.$ Then there is $K > 0$ such that

$$||\tilde{h}(\bar{f}^j(x))-\tilde{h}(\bar{f}^j(y))|| \geq ||\tilde{f}^j(x)- \tilde{f}^j(y)|| - 2K \geq  C_A \mu_A^j||\tilde{h}(x)-\tilde{h}(x)|| ,$$

$$ ||\tilde{f}^j(x)-\tilde{f}^j(y)|| \geq  C_A \mu_A^j||\tilde{h}(x)-\tilde{h}(x)|| + 2K.  $$
We get

$$ d_{\tilde{f}^j(\widetilde{W}^{wu}_f(y) )} (\tilde{f}^j(x), \tilde{f}^j(y)) \geq C_A \mu_A^j||\tilde{h}(x)-\tilde{h}(x)||. $$ Since $f$ is enough $C^1-$close to $A,$ the partially hyperbolic constants of $f$ are close to the corresponding for $A.$ The last inequality, taking $j  \rightarrow +\infty,$ contradicts this fact for endomorphisms $C^1-$enough close to $A.$

\end{proof}

\begin{lemma}\label{lemaperiodico}[Theorem C of \cite{Mic22measurable}]  Let $f: \mathbb{T}^2 \rightarrow \mathbb{T}^2 $ be a $C^{\infty}$ special Anosov endomorphism, with degree $d \geq 2$ and $A$ its linearization. The following are equivalent.
\begin{enumerate}
\item $f$ preserves a measure $\mu$ absolutely continuous with respect to $m.$
\item $f$ is smoothly conjugated with its linearization $A.$
\item $f$ preserves a measure $\mu$ absolutely continuous with respect to $m,$ with $C^1$ density.
\item For any point $p $ such that $f^n(p) = p,$ for some integer $n \geq 1,$ holds $Jf^n(p) = d^n.$
\item There exists $c>0$ such that $Jf^n(p) = c^n,$ for any $p$ such that $f^n(p) = p,$ for some $n \geq 1$ is an integer number.
\end{enumerate}

\end{lemma}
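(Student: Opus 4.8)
This statement is Theorem C of \cite{Mic22measurable}; the plan is to close the circle of implications, using Lemma \ref{lem1} as the hub relating the periodic data in $(4)$--$(5)$ to the existence of a smooth invariant volume, and the two-dimensional smooth rigidity of Theorem \ref{Cor01} (together with the dichotomy of Theorem \ref{TeoM22}) to produce the regularity of the conjugacy. Throughout one uses that a special Anosov endomorphism $f$ conjugate to $A$ is transitive and has topological degree $d=|\det A|$, so that Lemma \ref{lem1} applies with $k=d$; moreover $JA\equiv d$, hence $JA^{n}\equiv d^{n}$, and the periodic points of $f$ are matched with those of $A$ through the conjugacy $h$.

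First the routine implications. For $(2)\Rightarrow(3)$, let $h$ be the smooth conjugacy, $A\circ h=h\circ f$; since $A$ preserves normalized Lebesgue measure $m$, the measure $(h^{-1})_{\ast}m$ is $f$-invariant and absolutely continuous with positive $C^{\infty}$ (hence $C^{1}$) density. The implication $(3)\Rightarrow(1)$ is immediate, and $(4)\Rightarrow(5)$ is trivial with $c=d$. Finally $(3)\Leftrightarrow(4)$ is exactly Lemma \ref{lem1}: a $C^{1}$ absolutely continuous invariant density is the same thing as a $C^{1}$ invariant volume form, and such a form exists if and only if $Jf^{n}(p)=d^{n}$ along every periodic orbit.

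For $(5)\Rightarrow(4)$ I would show that the constant $c$ is forced to equal the degree $d$. Writing $\varphi:=\log Jf$, the hypothesis says that every periodic Birkhoff average of $\varphi$ equals $\log c$; by a Liv\v{s}ic-type theorem (applied on the natural extension, where $\hat f$ is hyperbolic and $\varphi$ depends only on the base point) $\varphi$ is cohomologous to the constant $\log c$, say $\varphi=\log c+u\circ f-u$ with $u$ continuous. Feeding this into the Ruelle transfer operator $(\mathcal L g)(x)=\sum_{f(y)=x} g(y)/Jf(y)$ yields $\mathcal L(e^{-u})=(d/c)\,e^{-u}$; since $\mathcal L$ preserves total Lebesgue mass, $\int\mathcal L g\,dm=\int g\,dm$, its spectral radius equals $1$, and a positive eigenfunction must realize it, forcing $c=d$, which is $(4)$.

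It remains to establish the rigidity $(1)\Rightarrow(2)$, which is the heart of the theorem and the main obstacle: one must upgrade the existence of a merely $L^{1}$ absolutely continuous invariant measure into a $C^{\infty}$ conjugacy, and here global regularity of the invariant density is \emph{not} automatic, since the transfer operator does not smooth along the stable direction. The plan is to promote $(1)$ to the periodic condition $(4)$ and then invoke the existing two-dimensional rigidity. Because $E^{s}$ is one-dimensional and the conjugacy restricted to stable leaves is $C^{1}$, differentiating $h\circ f^{n}=A^{n}\circ h$ along $W^{s}$ at a period-$n$ point shows $\lambda^{s}_{f}(p)=\lambda^{s}_{A}$; since $\lambda^{s}_{A}+\lambda^{u}_{A}=\log d$, condition $(4)$ is then equivalent to the equality $\lambda^{u}_{f}(p)=\lambda^{u}_{A}$ on every periodic orbit. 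To obtain it, note that $\mu\ll m$ invariant for the $C^{2}$ map $f$ satisfies Pesin's formula (Theorem \ref{pesin1}), hence is SRB (Theorem \ref{pesin2}) and, by ergodic decomposition, has $\lambda^{u}_{f}$ constant on a set of positive $m$-measure; by absolute continuity of the unstable foliation this level set meets some unstable leaf in positive leaf-measure, which places us in the rigid branch of the dichotomy of Theorem \ref{TeoM22} and therefore yields that $f$ is smoothly conjugate to $A$, i.e.\ $(2)$ (equivalently $\lambda^{u}_{f}\equiv\lambda^{u}_{A}$, whence $(4)$ and, through Lemma \ref{lem1} and Theorem \ref{Cor01}, the smooth conjugacy). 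The delicate point — matching the positive exponent of $\mu$ with the value $\lambda^{u}_{f}(\tilde m)$ defining the set $Z$ of Theorem \ref{TeoM22}, so that the dichotomy applies — is precisely where the machinery of \cite{Mic22measurable} is indispensable.
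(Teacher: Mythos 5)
The paper does not actually prove this lemma: it is imported verbatim as Theorem C of \cite{Mic22measurable} (just as the proof of Lemma \ref{lem1} is deferred to that reference), so your proposal can only be judged on its own terms. Most of your peripheral implications are sound. $(2)\Rightarrow(3)\Rightarrow(1)$ and $(4)\Rightarrow(5)$ are routine; $(3)\Leftrightarrow(4)$ via Lemma \ref{lem1} needs the omitted remark that a $C^1$ invariant density $\rho$ is strictly positive (from $\rho(x)=\sum_{f(y)=x}\rho(y)/Jf(y)$ the zero set is closed and saturated by preimages, hence empty by density of preimage sets); and your $(5)\Rightarrow(4)$ is correct and pleasantly self-contained, with two repairs: you must justify that the Liv\v{s}ic coboundary $U$ on $M^f$ descends to $\mathbb{T}^2$ (it does, since two histories with the same present are forward asymptotic in $M^f$, so $U(\tilde x)-U(\tilde y)$ is both constant along forward iterates and tends to $0$), and the spectral-radius language should be replaced by simply integrating $\mathcal{L}(e^{-u})=(d/c)\,e^{-u}$ against $m$, which gives $c=d$ at once.

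The genuine gap is exactly where you flag it, and it is not a technicality but the theorem's core: $(1)\Rightarrow(2)$. From $\mu\ll m$, Theorems \ref{pesin1} and \ref{pesin2} do give an ergodic SRB component whose level set $\{\lambda^u_f=c\}$, $c=\lambda^u_f(\mu)$, meets some unstable leaf in positive leaf measure. But the dichotomy of Theorem \ref{TeoM22} concerns only the level set $Z$ at the specific value $\lambda^u_f(\tilde m)$, where $\tilde m=h_{\ast}m$ is the measure of maximal entropy; if $c\neq\lambda^u_f(\tilde m)$, your set is disjoint from $Z$ and perfectly compatible with the null branch, so no conclusion follows. Closing this requires $c=\lambda^u_f(\tilde m)$, i.e.\ that the SRB measure and the MME share their unstable exponent, which is essentially the rigidity being proved. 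The soft tools available here only give one-sided bounds: Ruelle yields $\lambda^u_f(\tilde m)\geq\lambda^u_A$, while the volume-growth mechanism of Proposition \ref{Prop vol untable} applied to your positive-leaf-measure level set yields $c\leq\lambda^u_A$; these do not meet. Since forcing $c\geq\lambda^u_A$ would already make $\mu$ a measure of maximal entropy, the deferral to ``the machinery of \cite{Mic22measurable}'' is an admission that the decisive implication remains unproved, and the equivalence chain does not close.
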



For the construction in Theorem \ref{TeoB}, we need the following.

\begin{proposition}[\cite{BB}]\label{babo} Let $(M,m)$ be a compact manifold endowed with a $C^r$ volume
	form, $r \geq 2.$ Let $f$  be a $C^1$ and $m-$preserving diffeomorphisms of $M,$  admitting a dominated
	partially hyperbolic splitting $TM= E^s
	\oplus E^c \oplus E^u.$ Then there are arbitrary $C^1-$close
	and $m-$preserving perturbation $g$ of $f,$ such that
	
	$$\displaystyle \int_M \log(\jac^cg(x))dm > \displaystyle \int_M \log(\jac^cf(x))dm,$$
	where $\jac^cf(x)$ is the absolute value of the determinant of $Df$  restricted to $E^c_f(x).$
\end{proposition}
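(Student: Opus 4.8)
The plan is to follow the strategy of Baraviera and Bonatti. Set $\Phi(g)=\int_M \log\jac^c g\,dm$. First I would record that $\Phi$ is continuous in the $C^1$ topology: the splitting is dominated, so the bundle $E^c_g$ depends continuously (in fact H\"older) on $g$ near $f$, $Dg$ depends continuously on $g$, and $m$ is fixed. Since $E^c$ is $Dg$-invariant, the cocycle $\jac^c g$ is multiplicative, whence by Birkhoff and Oseledets $\Phi(g)=\sum_i \lambda_i^c(g,m)$ is exactly the integrated sum of center Lyapunov exponents. Thus it suffices to produce an arbitrarily $C^1$-small, $m$-preserving $g$ with $\Phi(g)>\Phi(f)$, and because domination is an open and stable condition, a perturbation supported in a small ball keeps the splitting dominated and alters $E^c_g$ only in a controlled way. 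So the problem reduces to a pointwise (linear) one: show that at Lebesgue-typical $x$ one can locally tilt the center bundle toward the strong-unstable bundle $E^u$ and strictly increase the pointwise center Jacobian, then patch the local gains together.

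For the linear core, at a typical $x$ I would look at $Df(x)$ restricted to $E^c(x)\oplus E^u(x)$ and compose $f$ near $x$ with an $m$-preserving local diffeomorphism whose derivative at $x$ is a small rotation $R_\theta$ in a plane spanned by a center vector and a strictly more expanding unstable vector; the new dominated center bundle of the perturbed map is then a small tilt of $E^c$ toward $E^u$. The model computation in the invariant $2$-plane, $R_\theta\,\mathrm{diag}(\lambda_c,\lambda_u)$ with $\lambda_u>\lambda_c>0$, shows that $\det$ is unchanged (rotations have determinant one) while the weak eigenvalue $\lambda_c(\theta)=\tfrac12\big(T-\sqrt{T^2-4\lambda_c\lambda_u}\big)$, with $T=(\lambda_c+\lambda_u)\cos\theta$, satisfies $\lambda_c'(0)=0$ and $\lambda_c''(0)>0$. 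Hence the center Jacobian increases to second order in $\theta$ with no cancelling first-order term, a definite local gain that exists precisely because domination forces the gap $\lambda_u>\lambda_c$ on the nontrivial bundle $E^u$; the higher-dimensional case is reduced to such $2$-planes.

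To globalize I would realize these pointwise rotations by standard $m$-preserving bumps supported in small disjoint balls covering a set of positive measure, each equal to the identity outside its ball and with the prescribed rotational derivative at the center, with $\theta$ taken uniformly small so that $g$ stays $C^1$-close to $f$. Using the continuity of $E^c_g$ and the uniform $O(\theta^2)$ gain on the positive-measure set, one argues that $\Phi(g)-\Phi(f)>0$ for all small $\theta$, the aggregated quadratic gains dominating the lower-order error coming from the off-support variation of the center bundle.

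The main obstacle is exactly this globalization: a local rotation perturbs the globally-defined center subbundle even away from the support of the bump, because $E^c$ is only continuously (H\"older) varying and is determined by a global limiting construction. One must therefore guarantee that these nonlocal changes cannot cancel the strictly positive local second-order gains. Controlling this balance is the heart of the Baraviera--Bonatti argument, and it is where the openness of domination and the \emph{quadratic} (rather than linear) nature of the gain are used decisively; the bookkeeping for center and unstable bundles of dimension greater than one is the remaining technical point.
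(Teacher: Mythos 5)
A preliminary remark: the paper does not actually prove Proposition \ref{babo}; it is quoted from Baraviera--Bonatti \cite{BB}, and the text only adds that the proof ``involves local perturbations by modifications in specific directions'' (plus the remark that for $C^r$ maps the perturbation can be taken $C^r$). So your attempt must be measured against the argument of \cite{BB} itself. At that level your outline is faithful to the visible features of that argument: one composes $f$ with $m$-preserving maps supported in small balls which are rotations mixing a center and an unstable direction; your $2\times 2$ model computation is correct (for $R_\theta\,\mathrm{diag}(\lambda_c,\lambda_u)$ the determinant is unchanged, the trace is $(\lambda_c+\lambda_u)\cos\theta$, and the weak eigenvalue satisfies $\lambda_c'(0)=0$ and $\lambda_c''(0)>0$, so the gain is quadratic with no first-order term); and the continuity of $g\mapsto\int_M\log \jac^c g\,dm$ in the $C^1$ topology, via continuous dependence of the dominated splitting on the map, is a genuine ingredient.

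However, the step you explicitly leave open --- globalization --- is not mere bookkeeping; as formulated, your plan fails exactly there, for a quantifiable reason. The perturbation $g=f\circ\rho$ satisfies $\|g-f\|_{C^1}=O(\theta)$, so outside the support, where $Dg=Df$, the bundle $E^c_g(x)$ (which depends on the whole orbit of $x$) moves by an amount that is a priori of order $\theta$, and hence so does $\log\jac^c g(x)-\log\jac^c f(x)$ pointwise: a first-order error against a second-order local gain, which no size estimate can overcome. What closes this in \cite{BB} is a structural idea absent from your sketch: never estimate the Jacobian on the unknown perturbed bundle. Since the angle distortions between invariant subbundles are coboundaries, one has $\int_M\log\jac^c g\,dm=\int_M\log\jac^{cu}g\,dm-\int_M\log\jac^{u}g\,dm$, and each term on the right is an integrated sum of \emph{top} Lyapunov exponents, hence admits a bundle-free characterization, e.g.\ $\int_M\log\jac^u g\,dm=\lim_n\frac{1}{n}\int_M\log\|\Lambda^{d_u}Dg^n\|\,dm$ by subadditivity. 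The rotation acts inside the $cu$-directions, so the $cu$-term is essentially untouched, while each passage of an orbit through the rotated ball costs the top $d_u$-dimensional growth a uniform factor $1-c\theta^2$ --- quadratic because one perturbs at a maximum of the growth as a function of the angle, precisely your $2\times2$ computation, now applied where domination makes it exploitable --- and the Birkhoff theorem gives a positive frequency of visits to the support. This converts a definite quadratic loss for $\int\log\jac^u$ into a definite quadratic gain for $\int\log\jac^c$, which then survives small errors by your continuity remark. Supplying this transfer-to-the-extremal-bundle mechanism (with its exterior-power version when $\dime E^u>1$) is what your proposal is missing.
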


\begin{remark}
	When $f$ is $C^r, r\geq 1,$ in  Proposition \ref{babo} above  the perturbation $g$ also can be taken $C^r.$
\end{remark}

The proof of the above proposition involves local perturbations by modifications in specific directions. In our case, by Lemma \ref{lem2}, stable, center, and unstable directions are well defined for each point $x \in \mathbb{T}^3.$  So we can apply local perturbations as in Proposition \ref{babo}, in order to get an increment of the center Lyapunov exponent.

\begin{proof}[Proof of Theorem \ref{TeoB}]
Consider $A:{\T}^3\rightarrow{\T}^3$  a  non-invertible linear Anosov endomorphism  $deg(A) = d >1,$
such that  $T{\T}^3=E^s\oplus E^{wu}\oplus E^{u}.$ Here $E^c = E^{wu}.$

As in the proof of Proposition \ref{babo} in \cite{BB} it is possible to  get a $C^{\infty}-$volume preserving Anosov endomorphism  $f=A \circ \phi$ such that 
\begin{equation}\label{eq01}
\lambda^{wu}_f=\displaystyle \int_{{\T}^3} \log(\jac^{wu}f(x))dm > \displaystyle \int_{{\T}^3} \log(\jac^{wu} A)dm=\lambda^{wu}_A.
\end{equation}
Where $\phi(x^s,x^{wu},x^u)=(x^s,\phi_2(x^{wu},x^u),\phi_3(x^{wu},x^u))$ is a volume-preserving perturbation  close to identity that preserves the sub bundle $E^s$. Since $\phi$ does not change the direction $E^s$, we have $\lambda^s_A=\lambda^s_f$, in particular $\lambda^s_A(p)=\lambda^s_f(p)$ for all $p\in Per(f)$, using Theorem 1.1 of \cite{AGGS22}, it implies that $f$ is special. By \cite{MT19} we get that $f$ and $A$ are conjugated by $h.$

Since $f$ is conservative $\lambda^s_f(p) + \lambda^{wu}_f(p) + \lambda^{su}_f(p) = \log(d) = \lambda^s_A + \lambda^{wu}_A + \lambda^{su}_A, $ for every $p \in Per(f).$ So $\lambda^{wu}_f(p) + \lambda^{su}_f(p) = \lambda^{wu}_A + \lambda^{su}_A,$ for every $p \in Per(f).$ By Anosov Closing Lemma, $m-$almost every $x \in \mathbb{T}^3$ is such that $\lambda^{wu}_f(x) + \lambda^{su}_f(x) =  \lambda^{wu}_A + \lambda^{su}_A.$ By the Pesin formula $m$ is the measure of maximal entropy of $f,$ and then $h_{\ast}(m) = m.$

As $W^{u}_f$ is absolutely continuous, foliation the set  $Z=\{x\in{\T}^3;\, \lambda_f^u(x)=\lambda^u_f(\tilde{m})\}$ intersect $m-$a.e. unstable leaf in a positive Lebesgue measure. Moreover, since $h_{\ast}(m)=m,$ then $h$ is an absolutely continuous map.

  Applying the version of Proposition \ref{babo} for our case implies that $\lambda^{wu}_f(x) > \lambda^{wu}_A,$ for $m-$a.e $x \in \mathbb{T}^3,$ and then $W^{wu}_f$ is not absolutely continuous (see \cite{Mic22measurable}) and it implies that $h$ can not be $C^1.$

Now we show that $h$ is not Lipschitz. In fact, if $h$ was Lipschitz so \mbox{$\tilde{m} =  h_{\ast}(m)$} would be an absolutely continuous measure. Moreover, by Lemma \ref{lem3}, since $h(W^{wu}_A) = W^{wu}_f,$ $h_{\ast}$ sends conditional measures of $m$ along $wu-$leaves of $A$ to conditional measures of  $\tilde{m}$ along $wu-$leaves of $f.$ If $h$ was Lipschtzian, in particular $h_{\ast}$ should send absolutely continuous conditional measures in absolutely continuous conditional measures. Consequently $$ \displaystyle\int_{{\T}^3} \log(\jac^{wu}f(x))dm  = h_{\tilde{m}}(f, W^{wu}_f) = h_{m}(A, W^{wu}_A) = \lambda^{wu}_A.$$ Its a contradiction with $(\ref{eq01}).$ For the Pesin Formula for expanding foliations, see for instance \cite{QZ} and \cite{LQ}.

\end{proof}

\section{Proof of Theorem \ref{TeoC}}

Denote $\lambda^{wu}_i(f,x) = \lambda^{u}_i(f,x),$ $ i =1, \ldots, k $ and $\lambda^{su}_i(f,x) = \lambda^u_{k+i}(f,x),$  \mbox{$i =1, \ldots, d-1-k.$} Analogously we define $\lambda^{wu}_i(A)$ and $\lambda^{su}_i(A).$

\begin{lemma}
	Let  $\mathcal{U}$ as in Lemma \ref{lem2} and $f\in\mathcal{U}$ be a $C^2$ special and $m-$preserving Anosov endomorphism, if $W^{wu}_{1,k}$ is absolutely continuous then
	
	\begin{itemize}
		\item [a)] $\displaystyle\sum_{i = 1}^{k} \lambda^{wu}_i(f,m) =  \sum_{i = 1}^{k} \lambda^{wu}_i(A),$
		\item [b)] $\displaystyle\sum_{i = 1}^{d-1-k} \lambda^{su}_i(f, m) =  \sum_{i = 1}^{d-1-k} \lambda^{su}_i(A)$ \,\,\, and
		\item [c)] $\lambda^{s}_f( m) =  \lambda^{s}_A.$
	\end{itemize}
\end{lemma}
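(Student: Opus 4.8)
The plan is to establish (a) from the absolute continuity hypothesis and then deduce (b) and (c) by additive bookkeeping. First I collect the consequences of conservativity. Since $f$ is $C^2$, special and preserves Lebesgue $m$, by Lemma \ref{lem2} and the classification of special endomorphisms it is conjugate to its linearization $A$ by a homeomorphism $h$; moreover, arguing as in the proof of Theorem \ref{TeoB}, the measure of maximal entropy of $f$ is $m$ and $\tilde m=h_\ast(m)=m$, so I may work with $m$ throughout. Because $m\ll m$, Theorem \ref{pesin1} gives the full Pesin formula $h_m(f)=\sum_{i=1}^{d-1}\lambda^u_i(f,m)$; as $A$ is linear, Lebesgue is its SRB measure and $h_m(A)=\sum_{i=1}^{d-1}\lambda^u_i(A)$. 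Since $h$ is a conjugacy, $h_m(f)=h_m(A)$, whence
\begin{equation}\label{eq:totalu}
\sum_{i=1}^{d-1}\lambda^u_i(f,m)=\sum_{i=1}^{d-1}\lambda^u_i(A).
\end{equation}
Conservativity also yields, via Lemma \ref{lem1} and the Anosov closing lemma exactly as in the proof of Theorem \ref{TeoB}, the relation
\begin{equation}\label{eq:conserv}
\lambda^s_f(m)+\sum_{i=1}^{d-1}\lambda^u_i(f,m)=\log\deg f=\lambda^s_A+\sum_{i=1}^{d-1}\lambda^u_i(A).
\end{equation}

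For (a) I would invoke the Pesin formula for invariant expanding foliations, namely the analogue of Theorem \ref{pesin2} for the subfoliation $W^{wu}_{1,k}$ available from \cite{QZ,LQ}. By Lemma \ref{lem2} the bundle $E^{wu}_{f,1,k}$ is well defined, $Df$-invariant and uniformly expanding and integrates to $W^{wu}_{1,k}$. The hypothesis that $W^{wu}_{1,k}$ is absolutely continuous says exactly that the conditional measures of $m$ subordinate to $W^{wu}_{1,k}$ (on the inverse limit) are absolutely continuous, i.e. $m$ has the SRB property along $W^{wu}_{1,k}$; hence the foliated Pesin formula holds with equality,
\begin{equation}\label{eq:folpesin}
h_m(f,W^{wu}_{1,k})=\int_{\T^d}\log\jac^{wu}f\,dm=\sum_{i=1}^{k}\lambda^{wu}_i(f,m).
\end{equation}
By Lemma \ref{lem3} the conjugacy $h$ maps the leaves of $W^{wu}_A$ onto those of $W^{wu}_{1,k}$ and pushes $m$ to $m$, so the foliated entropy is a conjugacy invariant and $h_m(f,W^{wu}_{1,k})=h_m(A,W^{wu}_A)$. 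Finally $W^{wu}_A$ is a flat linear foliation whose Lebesgue conditionals are smooth, so \eqref{eq:folpesin} applied to $A$ reads $h_m(A,W^{wu}_A)=\sum_{i=1}^{k}\lambda^{wu}_i(A)$. Chaining these three equalities yields (a).

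Parts (b) and (c) are then formal. Subtracting (a) from \eqref{eq:totalu} and using $\sum_{i=1}^{d-1-k}\lambda^{su}_i(\cdot)=\sum_{i=1}^{d-1}\lambda^u_i(\cdot)-\sum_{i=1}^{k}\lambda^{wu}_i(\cdot)$ for both $f$ and $A$ gives (b). For (c), relation \eqref{eq:conserv} shows $\lambda^s_f(m)-\lambda^s_A=\sum_{i=1}^{d-1}\lambda^u_i(A)-\sum_{i=1}^{d-1}\lambda^u_i(f,m)$, and the right-hand side vanishes by \eqref{eq:totalu} (here $\deg$ is the same for $f$ and $A$ and $\dim E^s_A=1$), so $\lambda^s_f(m)=\lambda^s_A$.

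The main obstacle is the rigidity identity \eqref{eq:folpesin}: it requires the Ledrappier--Young/Qian--Zhu theory of SRB measures for $C^2$ endomorphisms in the refined form that applies to an \emph{intermediate} expanding subfoliation, namely that $h_m(f,W^{wu}_{1,k})\le\sum_{i=1}^{k}\lambda^{wu}_i(f,m)$ always holds, with equality if and only if the conditional measures of $m$ along $W^{wu}_{1,k}$ are absolutely continuous, together with the conjugacy invariance of the foliated entropy on the inverse limit. The standing hypotheses on $A$ (irreducibility, $\dim E^s_A=1$, and the one-dimensional dominated splitting $E^u_A=E^u_1\oplus\cdots\oplus E^u_{d-1}$) are precisely what guarantee, through Lemma \ref{lem2}, that each $W^{wu}_{1,k}$ is a genuine quasi-isometric invariant foliation to which this theory, and the transport of conditional measures under $h$, apply.
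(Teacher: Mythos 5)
Your reduction of (b) and (c) to additive bookkeeping hides a circularity at the one point where the real content lives. Everything in your plan funnels through the claim that $m$ is the measure of maximal entropy of $f$, so that $\tilde m=h_\ast(m)=m$: you need this both for your total-sum identity $\sum_{i=1}^{d-1}\lambda^u_i(f,m)=\sum_{i=1}^{d-1}\lambda^u_i(A)$ and for the conjugacy invariance $h_m(f,W^{wu}_{1,k})=h_m(A,W^{wu}_A)$. But ``arguing as in the proof of Theorem \ref{TeoB}'' does not deliver it here: in Theorem \ref{TeoB} the identity $\lambda^s_f(p)=\lambda^s_A$ at periodic points is available \emph{by construction}, since the perturbation $\phi$ preserves $E^s$; only then do conservativity, the closing lemma and the Pesin formula give $h_m(f)=h_{top}(f)$. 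For an arbitrary special $f\in\U$ the only route to $\lambda^s_f=\lambda^s_A$ is the stable-exponent rigidity of Theorem 1.1 of \cite{AGGS22} --- which is precisely item (c), and is exactly how the paper proves (c). Your proposal instead derives (c) \emph{at the end} from the conservativity relation and the total-sum identity, but the total-sum identity was obtained from the MME property, whose proof needs (c): Pesin's formula (Theorem \ref{pesin1}) together with the variational principle only give the one-sided bound $h_m(f)=\sum_i\lambda^u_i(f,m)\le h_{top}(f)$, equivalently $\lambda^s_f(m)\ge\lambda^s_A$, and there is no soft argument for the reverse (for conservative Anosov diffeomorphisms Lebesgue is generically \emph{not} the MME, so nothing formal can close this gap). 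The repair is to reorder: prove (c) first via \cite{AGGS22} (specialness gives $\lambda^s_f(p)=\lambda^s_A$ at all periodic points; the closing lemma and ergodicity of $m$ spread it almost everywhere), deduce the total-sum identity from $\int\log|\det Df|\,dm=\log D$, conclude that $m$ is the MME and $h_\ast(m)=m$, and only then run your steps for (a) and (b).

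With that reordering, your proof of (a) is correct and genuinely different from the paper's. The paper proves two one-sided inequalities by the quasi-isometry volume-growth contradiction of Theorem \ref{TeoA}: $\sum_i\lambda^{su}_i(f,m)\le\sum_i\lambda^{su}_i(A)$ using absolute continuity of $W^u_f$ (automatic for $C^2$), and $\sum_i\lambda^{wu}_i(f,m)\le\sum_i\lambda^{wu}_i(A)$ using the hypothesis on $W^{wu}_{1,k}$, and then pinches both against the entropy identity. You instead read (a) off directly from the foliated Pesin formula: absolute continuity of $W^{wu}_{1,k}$ gives absolutely continuous conditionals of $m$ (note that with the paper's weak, null-set definition of absolute continuity this equivalence requires a measurable-selection argument --- worth a line), hence the SRB property along the expanding subfoliation, hence $h_m(f,W^{wu}_{1,k})=\sum_{i=1}^{k}\lambda^{wu}_i(f,m)$, which via Lemma \ref{lem3} and $h_\ast(m)=m$ equals $\sum_{i=1}^{k}\lambda^{wu}_i(A)$. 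This is the paper's own argument for the converse direction of Theorem \ref{TeoC} run backwards; it is shorter, but it leans on the heavier Qian--Zhu/Liu--Qian machinery (the ``only if'' direction of the analogue of Theorem \ref{pesin2} for expanding subfoliations), where the paper's inequality argument is more elementary and geometric. Your derivation of (b) by subtraction is fine once the circularity above is removed.
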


\begin{proof}
Consider $\tilde{f}$ and $\tilde{A}$ in the universal cover. We will start by proving items b) and a). Fist we show that $\displaystyle\sum_{i = 1}^{d-1-k} \lambda^{su}_i(f, \tilde{m}) \leq  \sum_{i = 1}^{d-1-k} \lambda^{su}_i(A)$, suppose by contradiction which there is a set $Z$ of positive volume such that for all $x\in Z$ we have $\displaystyle\sum_{i = 1}^{d-1-k} \lambda^{su}_i(f, \tilde{m}) >  \sum_{i = 1}^{d-1-k} \lambda^{su}_i(A)$. Since $f$ is $C^2$ the foliation $\widetilde{W}^u_f$ is absolutely continuous, then $Z$ intersects some leaf $\widetilde{W}^u_f(x)$ in a positive volume of the leaf. Proceeding as in the proof of Theorem \ref{TeoA}, we get a contradiction, then

\begin{equation}\label{equi07}
\displaystyle\sum_{i = 1}^{d-1-k} \lambda^{su}_i(f, \tilde{m}) \leq  \sum_{i = 1}^{d-1-k} \lambda^{su}_i(A).
\end{equation}

We are supposing that $W^{wu}_{1,k}$ is absolutely continuous. Since $W^{wu}_{1,k}$ is a uniformly expanding foliation, analogously we show that

\begin{equation}\label{equi08}
\displaystyle\sum_{i = 1}^{k} \lambda^{wu}_i(f,m) \leq  \sum_{i = 1}^{k} \lambda^{wu}_i(A).
\end{equation}

Using the Pesin entropy formula and Lemma \ref{lem1} we have,
\begin{align*}
h_m(f) &=\displaystyle\sum_{i = 1}^{k} \lambda^{wu}_i(f,m)+ \sum_{i = 1}^{d-1-k} \lambda^{su}_i(f, m)=\log(D)-\lambda_f^s(m)=\log(D)-\lambda_A^s\\
&=\displaystyle\sum_{i = 1}^{k} \lambda^{wu}_i(A)+ \sum_{i = 1}^{d-1-k} \lambda^{su}_i(A) = h_{top}(A) = h_{top}(f).
\end{align*}
where $D>1$ is the degree of $f$ and $A.$ Note that the above expression tell us that $m$ is the unique measure of maximal entropy of $f.$

Since $\displaystyle\sum_{i = 1}^{k} \lambda^{wu}_i(f,m)+ \sum_{i = 1}^{d-1-k} \lambda^{su}_i(f, m)=\displaystyle\sum_{i = 1}^{k} \lambda^{wu}_i(A)+ \sum_{i = 1}^{d-1-k} \lambda^{su}_i(A),$ by the inequalities (\ref{equi07}) and (\ref{equi08})
we obtain the equalities of the items $a)$ and $b)$ of the Lemma.

Item c) follows from Theorem 1.1 of \cite{AGGS22}.

\end{proof}	

It remains to prove the converse in Theorem \ref{TeoC}.
For this, suppose that 
\mbox{$\displaystyle\sum_{i = 1}^{k} \lambda^u_i(f, m) =  \sum_{i = 1}^{k} \lambda^u_i(A).$}

Let $\eta $ be an increasing subordinate to unstable manifold $W^u_f.$ We can consider $\eta^{wu}_f$ such that $\eta^{wu}_{f,1,k}(\tilde{x}) = \eta(\tilde{x})\cap p^{-1}(W^{wu}_{1,k}(p(\tilde{x}))).$ Since the $wu-$leaves are expanding for $f$ and submanifolds of $W^u_{f},$ then the partition $\eta^{wu}_{f,1,k}$ is an increasing subordinate to $W^{wu}_{1,k}$ partition, the scenery is done. Now the theory of SRB of \cite{QZ} can be applied to $W^{wu}_{1,k}.$ Denote by $\eta^{wu}_{A,1,k} = \tilde{h}^{-1}(\eta^{wu}_{f,1,k}),$ where $\tilde{h}$ is the induced by $h$ in the level of limit inverse spaces. By the increasing property of subordinate partitions, the values of conditional entropies
$H_{\hat{m}}(\eta^{wu}_{A,1,k} | \hat{A}(\eta^{wu}_A) ), H_{\hat{m}}( \eta^{wu}_{f,1,k} |\hat{f}(\eta^{wu}_{f,1,k})) $ are independent of the chosen increasing subordinated to $wu-$foliation partition, see for instance Lemma 5.3, Chapter VI of \cite{LQ}. These numbers we call respectively by $ h_{m}(A,W^{wu}_{1,k}(A))$ and $h_{m}(f,W^{wu}_{1,k}).$

Again, we know that $\tilde{m} = h_{\ast}(m)$ is the unique measure of maximal entropy for $f,$ so $m = \tilde{m}.$ Since $m = h_{\ast}(m),$ by Lemmas \ref{lem2} and \ref{lem3}
$$ h_{m} (f, W^{wu}_{1,k}) = h_{m} (A, W^{wu}_{1,k}(A)) = \sum_{i = 1}^{k} \lambda^u_i(A) =  \displaystyle\sum_{i = 1}^{k} \lambda^u_i(f, m), $$
the above expression says us that the unstable entropy of $f$ along the expanding foliation $W^{wu}_{1,k}$ satisfies the Pesin formula, so $m$ disintegrates as Lebesgue along the leaves of that foliation, see \cite{QZ} and \cite{LQ}, for Pesin Formula for expanding foliations and consequently the foliation $W^{wu}_{1,k}$ is absolutely continuous.


%


\end{document}